\title{Approximation by mappings with singular Hessian minors}
\author{Zhuomin Liu, Jan Mal\'y and Mohammad Reza Pakzad}
\thanks{This project is based upon work supported by, among others, the National Science
Foundation.
The research of
ZL has been supported by the Academy of Finland
via the center of excellence in analysis and dynamic research (project NO.271983).
JM was supported by the grant GA\,\v{C}R P201/15-08218S of the Czech Science Foundation.
MRP was partially supported by the NSF grant
DMS-1210258. 
}
\address
{ Department of Mathematics and Statistics, P.O.Box 35 (MaD)
FI-40014,
University of Jyv\"askyl\"a, Jyv\"askyl\"a, Finland}
\email{liuzhuomin@hotmail.com}
\address{Department of Mathematical Analysis, Faculty of Mathematics and Physics,
Charles University in Prague, So\-ko\-lovsk\'a 83,  Prague 8, 186\,75 Czech Republic}
\email{maly@karlin.mff.cuni.cz}
\address{Department of Mathematics, The Dietrich School of
Arts and Sciences,
University of Pittsburgh,
301 Thackeray Hall
Pittsburgh, PA 15260,
USA}
\email{pakzad@pitt.edu}
\theoremstyle{plain}
\newtheorem{thm}{Theorem}[section]
\newtheorem{lemma}[thm]{Lemma}
\newtheorem{corollary}[thm]{Corollary}
\newtoks\prt
\theoremstyle{definition}
\newtheorem{remark}[thm]{Remark}
\def\eqn#1$$#2$${\begin{equation}\label#1#2\end{equation}}
\numberwithin{equation}{section}
\def\ck#1{[#1]_k}
\def\A{\mathbf{A}}
\def\B{\mathbf{B}}
\def\M{\mathbf{M}}
\def\C{\mathcal{C}}
\def\F{\mathcal{F}}
\def\U{\mathcal{U}}
\def\diam{\operatorname{diam}}
\def\ep{\varepsilon}
\def\en{\mathbb N}
\def\er{\mathbb R}
\def\ff{\varphi}
\def\rank{\operatorname{rank}}
\def\sym{\mathrm{sym}}
\def\Q{\mathcal{Q}}
\def\P{\mathcal{P}}
\def\rn{\mathbb R^n}
\begin{document}

\begin{abstract}
Let $\Omega\subset\rn$ be a Lipschitz domain.
Given $1\leq p<k\leq n$ and any $u\in W^{2,p}(\Omega)$  belonging 
to the little H\"older class $c^{1,\alpha}$, we construct a sequence $u_j$ in the same space with $\operatorname{rank}D^2u_j<k$ almost everywhere such that $u_j\to u$ in $C^{1,\alpha}$ and weakly in $W^{2,p}$. This result is in strong contrast with  known regularity behavior of functions in $W^{2,p}$, $p\geq k$, satisfying the same rank inequality.
\end{abstract}
\maketitle

\section{Introduction}
The degenerate Monge-Amp\`ere equation received much attention due to its relation to the Gaussian curvature of flat surfaces, in particular, the image of a flat domain under isometric embedding. There are tremendous literature on the regularity theory of solutions to the Monge-Amp\`ere equation in various sense. They are too massive to survey here so we only mention a few. Regularity theory on convex solutions in the sense of Aleksandrov (on convex domains) was established in \cite{CNS}, \cite{GTW}; theory on nonconvex solution in viscosity sense to degenerate second order fully nonlinear equations can be found in \cite{IL}, \cite{Imb}; and nonconvex solutions to the degenerate Monge-Amp\`ere equation in the sense of Monge-Amp\`ere functions were developed in \cite{Fu}, \cite{Jerr}. In this paper, we mainly focus on pointwise solutions in the Sobolev spaces. A Sobolev function $f\in W^{2,p}(\Omega),p\geq 1$ where $\Omega$ is a Lipschitz domain in $\rn$, is a solution to the degenerate Monge-Amp\`ere equation in the a.e. sense if 
\eqn{MA}
$$
\det D^2 f(x)=0\qquad {\rm a.e.}\quad x\in\Omega.
$$ 
If $p\geq n$, then $\det D^2 f$ vanishes in the $L^1$ sense and is easily seen to be a solution to the degenerate Monge-Amp\`ere equation in the sense of \cite{Fu} and \cite{Jerr}. If we further assume a priori that $u$ is convex, then $\det D^2 f$ vanishing in $L^1$ also implies it is a solution to the degenerate Monge-Amp\`ere in the sense of Aleksandrov, see \cite{LM1} for discussion. Therefore, some regularity should be expected under sufficient integrability. In fact, Kirchheim \cite{Kirch} first proved that a $W^{2,\infty}$ solution to \eqref{MA} on a $2$-d domain  is developable, i.e., for each point in the domain, either there is a neighborhood of it on which $\nabla f$ is constant (and $f$ affine), or there is a line segment passing through this point and touching the boundary at both ends on which $\nabla f$ is constant (and $f$ affine). Pakzad \cite{pakzad} proved the same developability property for $W^{2,2}$ solutions to \eqref{MA} on a $2$-d domain. This in particular means that if $\Omega$ is a $2$-d domain and there is a sequence of $W^{2,2}$ solutions $f_j$ to \eqref{MA} which converges to some function $f$ in $C^1(\Omega)$, then necessary $f$ is developable since the limit of affine segments must be affine segments and if a $C^1$ map is affine on two intersecting line segment, it must be affine on their convex hull, see \cite{LP} for detail argument. 

In all dimension $n\geq 2$, the degenerate Monge-Amp\`ere equation can also be written as the rank inequality
\eqn{ri}
$$
\rank D^2f(x)<n\qquad {\rm a.e.}\quad x\in\Omega.
$$
More generally, we can consider functions $f\in W^{2,p}$ whose $k\times k$ minors of $D^2f$ vanishes in the a.e.\ sense, namely,
\eqn{rik}
$$
\rank D^2f(x)<k\qquad {\rm a.e.}\quad x\in\Omega,\qquad k\in \{2,3,...,n\},
$$
Again, if $p\geq k$, then all $k\times k$ minors vanishes in the $L^1$ sense and some regularity should be expected. Indeed, Jerrard and Pakzad \cite{jerr-pak} proved that solution to \eqref{rik} is weakly $(n-k+1)$ developable.
For precise definition of this property see \cite{jerr-pak}, in particular it implies that 
passing through each point in the domain there is a relatively open portion of a $(k-1)$-dimensional hyperplane
on which $\nabla f$ is $\mathcal{H}^{k-1}$-a.e.\ constant (and $f$ affine).
It might also happen that a point belongs to two $(k-1)$-dimensional hyperplanes, along each $f$ is affine, but the corresponding values of $\nabla f$ do not coincide. If $p\geq \{2k-2,n\}$, then  the solution is $C^1$-regular and hence developable as in the $2$-d case, with line segment replaced by hyperplanes. In particular, if there is a sequence of $W^{2,p},p\geq \{2k-2,n\}$ solution to \eqref{rik} converging to some function $f$ in $C^1(\Omega)$, then $f$ is developable. On the contrary, our main result Theorem \ref{t:main2} below shows that all the above regularity falls apart once $p<k$. 

The developability of isometric immersions of Sobolev regularity  is a related problem. Developability results for such $W^{2,p}$ immersions from $n$ dimensional domains into $\mathbb R^{n+k-1}$ were established as well in \cite{jerr-pak} for $p\geq \{2k-2,n\}$. It has been  shown  moreover in \cite{LM2} that each coordinate function of  a $W^{2,p}$, $p\geq 1$, isometric embedding of co-dimension  $k{-}1$ is a solution to \eqref{rik}. 
It is not known whether the type of constructions employed in this paper can also be  used  to create similar counter-examples for the isometric immersions.  On the other aspect, $C^{1,\alpha}$ approximation arise in recent years due to the successful construction by Conti, De Lellis and Sz\'ekelyhidi \cite{CDS}, of uniform approximation of any $C^1$ short map (see \cite{CDS} for definition) of co-dimension one by isometric immersions of class $C^{1,\alpha}$ for $\alpha<1/(n^2+n+1)$. 
When $n=2$, the limiting $\alpha$ has been subsequently improved to 1/5 recently by \cite{DIS}. 
Overall, our main results below, which also address the the H\"older regularity, seem to suggest that solutions to Monge-Amp\`ere equations, as formulated in \eqref{MA}, are more flexible than isometric immersions. This fact, however, must be contrasted with the results on 
the approximation property (for $\alpha <1/7$) or developability  (for $\alpha>2/3$) of $C^{1,\alpha}$  solutions  to the very weak formulation of the Monge-Amp\`ere equation in two dimensions:
\eqn{MA-vw}
$$
{\mathcal D}et\, D^2 f(x):= -\frac 12\, \mathrm{curl}\,\mathrm{curl}\, (\nabla f \otimes \nabla f)=0, 
$$ as recently discussed in \cite{lew-pak}.

\begin{thm}\label{t:main2}
Let $k\in\{2,3,\dots,n\}$ and $1\le p<k$.
Let $\Omega\subset\er^n$ be an open set with Lipschitz boundary and
$u\in W^{2,p}(\Omega)$. Then there
exists a sequence $u_j$ of functions in the same space
with $\rank D^2u_j <k$ a.e. such that
$u_j\to u$  weakly in $W^{2,p}$. If, moreover, $u$ is also in $c^{1,\alpha}(\overline\Omega)$ for $0<\alpha<1$, then 
the sequence $u_j$ can also be made to be in $c^{1,\alpha}(\overline\Omega)$ and converging  in $c^{1,\alpha}$ to $u$.  
\end{thm}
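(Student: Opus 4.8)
The plan is to reduce the theorem to the construction of a single ``building block'' on a cube and then to paste together many small rescaled copies of it, arguing that all the analytic cost is absorbed precisely because $p<k$.

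First I would dispose of the soft parts. A sequence that is bounded in $W^{2,p}$ and converges in $C^1(\overline\Omega)$ automatically converges weakly in $W^{2,p}$ (and, when $p=1$, weak $L^1$-convergence of the Hessians follows once one also has equi-integrability, which the construction will supply), so it suffices to produce $u_j$ with $\rank D^2u_j<k$ a.e., with $u_j\to u$ in $C^1(\overline\Omega)$ — in $c^{1,\alpha}(\overline\Omega)$ when $u$ is — and with $\sup_j\|D^2u_j\|_{L^p}<\infty$. Extending $u$ across the Lipschitz boundary I may assume $\Omega$ is a cube $Q$; for the $W^{2,p}$-only statement a mollification and a diagonal argument let me also assume $\nabla u$ continuous (automatic in the Hölder case). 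The heart is then a one-cube lemma of the following type: there is $C=C(n,k,p)$ so that for every continuous symmetric matrix field $B$ on the unit cube $Q_0$ and every $\eta>0$ there is $\omega\in W^{2,p}(Q_0)$, equal near $\partial Q_0$ to a fixed ruled ``reference'' function depending only on $B$ restricted to the faces, with $\|\omega\|_{C^1}<\eta$ — in the Hölder case $\omega\in c^{1,\alpha}$ with $\|\omega\|_{C^{1,\alpha}}<\eta$ — such that $\rank\bigl(B(x)+D^2\omega(x)\bigr)<k$ for a.e.\ $x$ and $\|B+D^2\omega\|_{L^p(Q_0)}\le C\bigl(1+\|B\|_{L^p(Q_0)}\bigr)$. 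The role of a variable $B$ is that $\omega$ must \emph{correct} the rank of a prescribed, essentially arbitrary Hessian while moving the gradient only slightly; the prescribed boundary behaviour is what makes the rescaled copies fit together.

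Granting such a lemma, fix $\varepsilon>0$, split $Q$ into subcubes $Q_i$ of side $\varepsilon$ centred at $x_i$, apply the lemma on each with $B^{(i)}(y):=D^2u(x_i+\varepsilon y)$ and small $\eta$, rescale by $w_i(x):=\varepsilon^2\omega^{(i)}((x-x_i)/\varepsilon)$, and set $u_\varepsilon:=u+\sum_i w_i$; the boundary compatibility makes this $C^1$ (indeed matches the prescribed rulings across the shared faces) and, in the Hölder case, keeps $u_\varepsilon\in c^{1,\alpha}$. On $Q_i$ one has $D^2u_\varepsilon=(B^{(i)}+D^2\omega^{(i)})((x-x_i)/\varepsilon)$, hence $\rank D^2u_\varepsilon<k$ a.e.; moreover $\|u_\varepsilon-u\|_{C^1}\le C\max_i\|w_i\|_{C^1}\le C\varepsilon\eta$ and, in the Hölder case, $\|u_\varepsilon-u\|_{C^{1,\alpha}}\le C\max_i[\nabla w_i]_{C^\alpha}\le C\varepsilon^{1-\alpha}\eta$, both $\to0$; and the change of variables gives
\[
\|D^2u_\varepsilon\|_{L^p(Q)}^p=\sum_i\varepsilon^n\,\|B^{(i)}+D^2\omega^{(i)}\|_{L^p(Q_0)}^p\le C\Bigl(|Q|+\|D^2u\|_{L^p(Q)}^p\Bigr),
\]
uniformly in $\varepsilon$, where I used $\varepsilon^n\#\{i\}=|Q|$ and $\varepsilon^n\|B^{(i)}\|_{L^p(Q_0)}^p=\|D^2u\|_{L^p(Q_i)}^p$. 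Taking $\varepsilon=1/j$ finishes, and for $p=1$ the same estimate localized to each $Q_i$ yields equi-integrability of $\{D^2u_j\}$.

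Thus everything is concentrated in the building block, and this is where $p<k$ and the real work enter. One cannot take $\omega\in W^{2,\infty}$: that would make the graph of a function with Hessian $B+D^2\omega$ developable and force $\nabla\omega$ far from $0$, so the construction is genuinely of unbounded second derivative. I would assemble $\omega$ from explicit ruled pieces — cylinders and, near $\partial Q_0$, the reference functions — interpolated through ``cone-type'' transition regions concentrated near the edges and corners of a dyadic subgrid; the key analytic fact is that a $(k{-}1)$-fold degenerate cone, $|x'|$ with $x'\in\er^k$ and constant in the remaining $n-k$ variables, has rank $k-1$ and lies in $W^{2,p}_{\loc}$ exactly for $p<k$, so that summing the contributions of all the transition cones keeps $\|D^2\omega\|_{L^p}$ finite precisely under our hypothesis. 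For the little-Hölder conclusion the sharp cones must be replaced by milder ridge-type profiles built from one-variable functions $|t|^{1+\beta}$ with $\alpha<\beta<1$ (so $\nabla\omega$ has modulus of continuity $o(r^\alpha)$), the transition scales being tuned so that the $C^{1,\alpha}$ smallness and the $L^p$ bound survive at once. I expect the construction of this block to be the main obstacle — in particular making the rank drop hold \emph{pointwise} almost everywhere rather than merely on a laminate in matrix space, covering the whole cube (including neighbourhoods of the grid interfaces, which forces the reference functions themselves to be produced by a downward induction on dimension) with matrices of rank $<k$, matching the boundary data, and, in the Hölder range, doing all of this inside an arbitrarily small $C^{1,\alpha}$ norm; the Hölder case is the most delicate point.
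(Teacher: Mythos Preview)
Your outer reduction is close to the paper's: approximate $u$ by smooth functions (Section~9 and Lemma~\ref{l:lipappr}), cover $\overline\Omega$ by cubes, and on each cube invoke a one-cube statement (Lemma~\ref{l:main1}) producing $f\in W^{2,p}\cap c^{1,\alpha}$ with $\rank\nabla^2 f<k$ a.e., $f=w$ and $\nabla f=\nabla w$ on the boundary, $\|f\|_{2,p}\le C(\|w\|_{2,p}+1)$ and $\|f-w\|_{C^{1,\alpha}}<\ep$.  The boundary matching in the paper is simpler than your ``reference functions'' scheme: each correction has compact support in its cube, so nothing needs to be glued across faces.  Your extra rescaling layer (cubes of side $\ep$) is harmless but unnecessary once the one-cube lemma is stated for variable $B$.

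The real gap is the construction of the block.  Your plan is to build $\omega$ in one shot out of cylinders and cone-type pieces, using that $|x'|$ with $x'\in\er^k$ has rank-$(k{-}1)$ Hessian and lies in $W^{2,p}_{\loc}$ for $p<k$.  But this heuristic only says that $\nabla^2\omega$ \emph{itself} is rank-deficient; it does not force $B+\nabla^2\omega$ to be so.  Near a cone axis the unbounded Hessian dominates, yet on the bulk of the cube $\nabla^2\omega$ is bounded and $B+\nabla^2\omega$ will typically have full rank; tailoring $\omega$ to kill a direction of $B$ pointwise is exactly what clashes with $\|\omega\|_{C^{1}}$ being small.  The paper avoids this by never enforcing the rank condition at any finite stage.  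Its elementary block (Lemma~\ref{l:main}, taken from \cite{LM1}) is a \emph{smooth} $g$ supported in a cube which, for a \emph{constant} symmetric $\A$, only achieves the contraction $\int_Q\ck{\A+\nabla^2 g}^{p/k}\,dx\le\tau\,|Q|\,\ck{\A}^{p/k}$ for a fixed $\tau=\tau(n,k,p)<1$, together with $\|\nabla^2 g\|\le\|\A\|$ and $|g|+|\nabla g|$ arbitrarily small.  One then iterates: with $f_0=w$, at step $j$ partition $Q_0$ finely, freeze $\nabla^2 f_{j-1}$ at each cube centre, apply the block, and set $f_j=f_{j-1}+g_j$.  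The estimate \eqref{cprop3} gives $\int_{Q_0}\ck{\nabla^2 f_j}^{p/k}\lesssim \tau^j$, so $\sum g_j$ converges in $W^{2,p}_0(Q_0)$ and the limit $f$ has $\ck{\nabla^2 f}=0$ a.e.; interpolation between the Lipschitz bound $\|\nabla^2 g_j\|_\infty\le K_j$ and $\|\nabla g_j\|_\infty\le\ep_j$ yields $\|g_j\|_{C^{1,\alpha}}\le K_j^{\alpha}\ep_j^{1-\alpha}$, which one makes summable by choosing $\ep_j$.  This iterative $\tau$-contraction of $\int\ck{\cdot}^{p/k}$ is the idea missing from your sketch; the hypothesis $p<k$ enters in the proof of the smooth one-step Lemma~\ref{l:main}, not through integrability of cone Hessians.
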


\begin{remark}
The method of the construction is adapted from \cite{LM1} where just one $W^{2,p}$
and $C^{1,\alpha}$ strictly convex solution of $\rank Du<k$ ($p<k$) has been constructed.
As pointed out in \cite[Remark 3]{LM1}, this convex solution can be close to a given
uniformly strictly convex smooth function. In the present paper, we approximate 
arbitrary functions of the assumed regularity, not only convex ones; on the other
hand, we do not care of preservation of convexity.

A similar convex example to \cite{LM1}, with even better (borderline) Sobolev regularity,
has been obtained also independently by Faraco, Mora-Corral and Oliva \cite{FMO}.

{Such pathological convex functions cannot solve the degenerate\penalty-10000 
\text{Monge-Amp\`ere} equation in the sense of Alexandrov.} 
On the other hand, even the convex solutions in the sense of Alexandrov to 
$$
\det D^2u=\rho\leq 1\quad {\rm in}\quad\Omega%,\quad u|_{\partial\Omega}={\rm constant}
$$
may fail to be $W^{2,1}$ as shown by Mooney \cite{Moo}. 
Note that in his example the support of $\rho$ is highly irregular. 
\end{remark}

\begin{remark}
The $k\times k$ minors of $\nabla^2 u_j$ in Theorem \ref{t:main2} vanish in the pointwise sense.
It is often convenient to interpret gradient or Hessian minors as distributions. Under this view, the minors
of $\nabla^2 u_j$ do not vanish identically, but turn to be signed singular measures, see Section~\ref{s:heme}.
\end{remark}
\begin{remark}
The main argument is that we can produce the desired approximation for any $C^2$-smooth function. The full generality follows from the observation that smooth functions are dense in Sobolev spaces and little H\"older spaces.
\end{remark}

Due to its null-Lagrangian behavior (cf.\ Remark \ref{null}), the Jacobian has been important in the theory of weak lower semicontinuity of integral functional, see \cite{Dac1} for a comprehensive introduction. Therefore, denseness of 
$\{f\in W^{1,p}\colon p<n,\;\det Df=0\;{\rm a.e.}\}$ in the weak topology $W^{1,p},p<n$ and its variants have also been an object of interest. For example, Haj{\l}asz \cite{Haj} proved the weak denseness of the class 
$\{f\in W^{1,p}\colon p<k,\;\operatorname{rank}Df<k\;{\rm a.e.}\}$ in the space $W^{1,p}$. Rindler \cite{Rin}, Koumatos, Rindler, and Wiedemann \cite{KRW}, \cite{KRW1} proved the weak denseness of $\{f\in W^{1,p}\colon 1<p<n,\; J_1\leq \det Df\leq J_2\;{\rm a.e.}\}$ in the space $W^{1,p}$ for any prescribed measurable function $J_1:\Omega\to [-\infty,\infty),J_2:\Omega\to (-\infty,\infty]$ with $J_1\leq J_2$ a.e. Our main theorem \ref{t:main2} can be adapted without difficulty into the following first order case. 
\begin{thm}\label{t:main1}
Let $k\in\{2,3,\dots,n\}$ and $1\le p<k$.
Let $\Omega\subset\er^n$ be an open set with Lipschitz boundary and
$u\in W^{1,p}$$(\Omega)$. Then there
exists a sequence $u_j$ of mappings in the same space with $\rank Du_j <k$ a.e.\
such that
$u_j\to u$  weakly in $W^{1,p}$. 
If, moreover, $u$ is also in $c^{0,\alpha}(\overline \Omega)$ for $0<\alpha<1$, then the sequence $u_j$ can also made to be in $c^{0,\alpha}(\overline\Omega)$ and converging  in $c^{0,\alpha}$ to $u$.

\end{thm}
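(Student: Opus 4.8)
The plan is to reduce the first-order statement to the second-order one, or rather to its construction, by the same mechanism that links \eqref{rik} and \eqref{MA-vw}: if $u\in W^{2,p}(\Omega)$ satisfies $\rank D^2 u<k$ a.e., then $f:=\nabla u$ is a map in $W^{1,p}(\Omega;\er^n)$ with $\rank Df<k$ a.e., since $Df=D^2u$. However, a general $u\in W^{1,p}(\Omega;\er^m)$ is not a gradient, so one cannot literally invoke Theorem \ref{t:main2}; instead I would rerun the construction that proves Theorem \ref{t:main2} one differentiability order lower. As the third remark after Theorem \ref{t:main2} indicates, it suffices to treat $u\in C^1(\overline\Omega)$ (respectively $u\in C^\infty$), because smooth maps are dense in $W^{1,p}$ and in $c^{0,\alpha}(\overline\Omega)$, and a diagonal argument then upgrades a smooth target to the general one: approximate $u$ first by smooth $v$ close in $W^{1,p}$ (and in $c^{0,\alpha}$ if applicable), then approximate $v$ by the $\rank<k$ sequence produced below.

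The core step is therefore: given $v\in C^\infty(\overline\Omega;\er^m)$ and $\delta>0$, produce $w\in W^{1,p}(\Omega;\er^m)$ (also in $c^{0,\alpha}$ when required) with $\rank Dw<k$ a.e., with $\|w-v\|_{C^0}<\delta$ (and $\|w-v\|_{c^{0,\alpha}}<\delta$), and with $\|Dw\|_{L^p}$ bounded independently of $\delta$ so that the resulting sequence converges weakly in $W^{1,p}$. This is exactly the scheme from \cite{LM1} adapted as in the proof of Theorem \ref{t:main2}: one works on a fine grid of small cubes, on each cube replaces $v$ by a piecewise-affine (or finitely-many-gradients) map whose differential, on each subregion, is the affine part of $v$ perturbed by a matrix of rank $<k$ — concretely, one adds highly oscillatory one-dimensional profiles in $k-1$ of the coordinate directions so that $Dw$ lands in the set of matrices of rank at most $k-1$, while the oscillation amplitude is tuned to keep the $C^0$ (and H\"older) error below $\delta$. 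Because $p<k$, the $L^p$ norm of these oscillations stays controlled: a profile oscillating at frequency $\sim 1/\lambda$ with amplitude $\sim\lambda$ contributes $O(1)$ to $\|Dw\|_{L^p}$ only when $p<k$, which is precisely where the hypothesis is used. Weak $W^{1,p}$ convergence then follows from the uniform $W^{1,p}$ bound together with $C^0$ convergence.

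The main obstacle, as in Theorem \ref{t:main2}, is the simultaneous control of three competing quantities on each cube: the rank constraint (an exact, pointwise algebraic condition), the smallness of the $C^{0}$ — and in the H\"older case the $c^{0,\alpha}$ — error, and the uniform $L^p$ bound on $Dw$. The rank condition forces genuinely non-smooth, oscillatory corrections; making those corrections small in $c^{0,\alpha}$ for every $\alpha<1$ requires the amplitude-to-frequency balance to be chosen carefully and uniformly over the grid, and matching the corrections across cube boundaries (so that $w$ is globally $W^{1,p}$ and $c^{0,\alpha}$, not merely piecewise so) is the delicate bookkeeping. All of this is strictly easier than in the second-order case: there is no need to produce a potential, no compatibility (curl-free) constraint on the constructed differential, and the gluing is at the level of Lipschitz rather than $C^{1,\alpha}$ matching — so the argument for Theorem \ref{t:main2}, with $D^2$ replaced by $D$ throughout and one order of regularity removed, carries over essentially verbatim. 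Finally, the Lipschitz regularity of $\partial\Omega$ is used, exactly as in Theorem \ref{t:main2}, only to extend $u$ to a neighborhood and to handle the boundary cubes of the grid.
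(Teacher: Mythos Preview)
Your overall strategy is correct and is exactly what the paper does: rerun the iterative construction of Sections~4--7 one differentiability order lower, reducing first to smooth data as in Section~\ref{s:concl}. The paper itself says the proof ``is exactly the same with obvious modifications''.

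There is, however, one specific modification the paper singles out that your proposal glosses over. In the second-order construction the building block is Lemma~\ref{l:main}, which takes a \emph{symmetric} matrix $\A$ and produces a scalar $g$ with control on $\nabla^2 g$; symmetry is automatic there because $\A=\nabla^2 f_{j-1}(x_Q)$. In the first-order setting the relevant matrix $\B=\nabla f_{j-1}(x_Q)$ is not symmetric, so Lemma~\ref{l:main} does not apply directly. The paper handles this via Corollary~\ref{c:main}: write the polar decomposition $\B=O\A$ with $\A=\sqrt{\B^T\B}$ symmetric, apply Lemma~\ref{l:main} to $\A$ to get $g$, and set $h:=O\circ\nabla g$; then $\nabla h=O\nabla^2 g$ and $\ck{\B+\nabla h}=\ck{\A+\nabla^2 g}$, so all estimates transfer. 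This is precisely the mechanism that exploits your observation that ``there is no curl-free constraint'': the perturbation $h$ need not be a gradient, only $C^1$ with compact support in the cube. Your description of the building block as ``piecewise-affine'' with ``one-dimensional profiles in $k-1$ directions'' is not what the paper does; the perturbations $h$ are smooth, and the rank drop is achieved only in the limit of the iteration via \eqref{cprop3} (transported verbatim), not at any finite stage.
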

All these results are in contrast to the weak continuity result by M{\"u}ller, Qi and Yan \cite{MQY}, which states that if $u_j\to u$ weakly in $W^{1,n}$ and $\det Du_j\geq 0$ a.e., then $\det Df_j\to \det Du$ weakly in $L_{\rm loc}^1$. 

We only demonstrate the construct for Theorem \ref{t:main2}, the proof of Theorem \ref{t:main1}, based on Corollary \ref{c:main} instead of Lemma \ref{l:main} is exactly the same with obvious modifications whenever needed.

\section{Preliminaries}

\subsection{Singular values}
Let $\A$ be a real $n\times n$ matrix. The \textit{singular values} 
of $\A$, i.e., the eigenvalues of $\sqrt{\A^T \A}$, which are real and nonnegative, will be denoted by 
$\mu_i=\mu_i(\A)$
indexed in
nondecreasing order, counting multiplicity, i.e., 
$$
\mu_1\leq
\mu_2\leq \cdots\leq \mu_n.
$$
We are interested in the expressions
$$
\ck{\A}=S_k(\mu_1,\dots,\mu_n),\qquad k=1,\dots,n,
$$
where $S_k$ is the $k$-th elementary symmetric function on $\rn$
$$
S_k(\mu_1,\dots,\mu_n)=\sum_{i_1<i_2<\dots<i_k}
\mu_{i_1}\cdots\mu_{i_k}.
$$
The expressions $\ck{\A}$ are important invariants as the order of singular values
may be difficult to watch. Since $A\to \mu_i(A)$ is continuous, it follows that 
$\ck{\A}$ depends continuously on $\A$. Moreover, denoting $\|\cdot\|$ the operator norm, 
\eqn{norm}
$$
\mu_n=\|\A\|
$$
and 
$$
\aligned
\rank \A<k&\iff \mu_1=\dots=\mu_{n-k+1}=0\iff \mu_{n-k+1}=0 
\\&
\iff
\mu_{n-k+1}\cdots\mu_n=0.
\endaligned
$$
Since 
$$
\mu_{n-k+1}\cdots\mu_n\le S_k(\mu_1,\dots,\mu_n)
\le \binom nk\; \mu_{n-k+1}\cdots\mu_n,
$$
we infer that
$$
\rank \A<k\iff \ck{\A}=0.
$$
Further, if the matrix $\A$ is symmetric, we write
$$
L_k(\A)=S(\lambda_1,\dots,\lambda_n),
$$
where $\lambda_1,\dots,\lambda_n$ are the eigenvalues of $\A$ (not put into absolute values). 
Note that (up to a permutation), $\mu_i=|\lambda_i|$, and thus
$|L_k(\A)|\le \ck{\A}$.

\subsection{Little H\"older spaces}
Let $\Omega\subset\rn$ be a Lipschitz domain. 
The \textit{little H\"older space} $c^{1,\alpha}(\overline\Omega)$ is the family of all 
continuously differentiable functions $u$ on $\overline\Omega$ with the property
$$
\lim_{r\to 0}\omega(Du,r^{\alpha})=0,
$$
where $\omega(f,\cdot)$ is the modulus of continuity
$$
\omega(f,r)=\sup\Bigl\{\frac{|f(y)-f(x)|}{|y-x|^{\alpha}}\colon x,y,\in\Omega,\;0<|y-x|<r\Bigr\}.
$$
The derivative $Du$, being uniformly continuous, can be naturally extended so that both $u$
and $Du$ are defined on $\overline\Omega$.
Clearly, $c^{1,\alpha}(\overline\Omega)$ is a closed subspace of $C^{1,\alpha}(\overline\Omega)$.
In fact, it is the closure of $C^{\infty}(\overline\Omega)$ in $C^{1,\alpha}(\overline\Omega)$, see Section
\ref{s:concl} below.

%By \cite[Sec. 2.5]{Brud}, each $u\in c^{1,\alpha}(\Omega)$ can be extended to a function from 
%$c^{1,\alpha}(\rn)$  and a standard
%mollification gives a function in $C^{\infty}(\rn)$ close to $u$ in the %$C^{1,\alpha}$-norm. %
%Thus, $c^{1,\alpha}(\Omega)$ is the closure of $C^{\infty}(\rn)\lfloor\Omega$ in $C^{1,\alpha}(\Omega)$.

%\subsection{Relaxation} TO BE WRITTEN

\section{The building pattern}

The following lemma follows from \cite[Lemma 1 and (3.23)]{LM1}, taking into account of Remark 2 there. 
\begin{lemma}\label{l:main}
Let $k\in\{2,\dots,n\}$, $1\le p<k$ and $\ep_0>0$.
Let $\A$ be a symmetric mapping
and $Q\subset\rn$ be a closed cube. 
Then there exist $\tau\in (0,1)$
depending only on $n$, $k$ and $p$ and
a function $g\in \C^2(\rn)$ with support in $Q$ such that 
such that 
\eqn{prop0}
$$
|g(x)|+|\nabla g(x)|\le\ep_0,\qquad x\in Q,
$$
\eqn{prop1}
$$
\|\nabla^2 g(x)\|\le\|\A\|,\qquad x\in Q,
$$
\eqn{prop2}
$$
\|\nabla^2 g(x)\|^k\le \ck{\A},\qquad x\in Q,
$$
\eqn{prop3} 
$$
\int_Q \ck{\A+\nabla^2 g(x)}^{p/k}\,dx\le \tau|Q|\ck{\A}^{p/k},
$$
and
\eqn{prop4}
$$
\bigg|\int_Q \ck{\A+\nabla^2 g(x)}-\ck{\A}\,dx\biggr| \le \ep_0|Q|,
$$
\end{lemma}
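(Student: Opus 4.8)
We may assume $\rank\A\ge k$, for otherwise $\ck\A=0$ and $g\equiv0$ already meets every requirement. The idea is to take $g$, on a slightly smaller concentric cube $Q'\subset Q$, to be a rapidly oscillating function of one variable along a single eigendirection of $\A$, and to damp it to zero on the thin shell $Q\setminus Q'$. After an orthogonal change of coordinates we may assume $\A$ is diagonal; write its singular values as $\mu_1\le\dots\le\mu_n$, so that $b:=\mu_{n-k+1}>0$, and let $e$ be a unit eigenvector of $\A$ whose eigenvalue has modulus $b$. Let $\delta=\delta(n,k,p)\in(0,\tfrac13)$ be a small constant, to be fixed at the end, and on $Q'$ set $g(x)=\psi(x\cdot e)$, where $\psi''=:\sigma$ is a mollified mean–zero square wave taking (up to the mollification) the values $\pm(1-3\delta)b$, each on half of every period $\rho$. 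Then $\|\nabla^2 g\|=|\sigma|\le(1-3\delta)b$ on $Q'$, while the overhead produced by the cut-off on $Q\setminus Q'$ is at most $C_n\rho b/w$, where $w$ is the width of the shell, hence $\le 3\delta b$ once $\rho$ is small compared with $\delta w$; so $\|\nabla^2 g\|\le b\le\mu_n=\|\A\|$ throughout $Q$, which is \eqref{prop1} (and this is the only use of the slightly reduced amplitude). Likewise $\|\nabla^2 g\|^k\le b^k\le\mu_{n-k+1}\cdots\mu_n\le\ck\A$ gives \eqref{prop2}, and $|g|+|\nabla g|\le C_n\rho b\le\ep_0$, i.e.\ \eqref{prop0}, holds once $\rho$ is small.

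Since $e$ is an eigenvector of $\A$, the matrices $\A$ and $e\otimes e$ are simultaneously diagonalised, so on $Q'$ the singular values of $\A+\nabla^2 g=\A+\sigma(x\cdot e)\,e\otimes e$ are the $\mu_i$ with $i\ne n-k+1$, together with $b+\sigma(x\cdot e)\in[3\delta b,(2-3\delta)b]$ (the sign of the eigenvalue is immaterial as $\sigma$ is symmetric). With $Y:=\{\mu_i:i\ne n-k+1\}$, $\beta:=b\,S_{k-1}(Y)>0$ (since $S_{k-1}(Y)\ge\mu_{n-k+2}\cdots\mu_n>0$, using $k\ge2$) and $\gamma:=S_k(Y)\ge0$, so that $\ck\A=\beta+\gamma$, the recursion $S_k(\{t\}\cup Y)=t\,S_{k-1}(Y)+S_k(Y)$ gives $\ck{\A+\nabla^2 g}=\ck\A+\sigma(x\cdot e)\,S_{k-1}(Y)$ on $Q'$. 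Thus $\ck{\A+\nabla^2 g}$ is affine in $\sigma$ with mean $\ck\A$ over $Q'$; as $Q\setminus Q'$ has arbitrarily small measure and carries an integrand bounded by a constant depending only on $\A$ and $Q$, \eqref{prop4} follows. For \eqref{prop3}, up to the controllable errors coming from the shell and from the mollification,
$$
\int_Q\ck{\A+\nabla^2 g}^{p/k}\,dx=\tfrac12|Q|\Bigl[(\gamma+3\delta\beta)^{q}+\bigl((2-3\delta)\beta+\gamma\bigr)^{q}\Bigr],\qquad q=\tfrac pk\in(0,1) .
$$
Dividing by $(\beta+\gamma)^{q}$ and putting $s:=\gamma/(\beta+\gamma)\in[0,1)$ turns the bracket into $\Phi_\delta(s)$, where $\Phi_\delta\to\Phi_0$ uniformly on $[0,1]$ as $\delta\to0$ and $\Phi_0(s)=\tfrac12\bigl(s^{q}+(2-s)^{q}\bigr)$. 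Since $\Phi_0(1)=1$, $\Phi_0'(1)=0$ and $\Phi_0''<0$ on $(0,1)$, we have $\Phi_0<1$ on $[0,1)$; but $\sup_{[0,1)}\Phi_0=1$, so a single oscillation yields a uniform decay factor only if $s$ stays bounded away from $1$.

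Securing that is the crux, and it rests on the combinatorial bound
$$
\gamma=S_k(Y)\le (n-1)\,b\,S_{k-1}(Y)=(n-1)\,\beta .
$$
Indeed, the index set $I=\{1,\dots,n\}\setminus\{n-k+1\}$ has only $k-1$ elements exceeding $n-k+1$, so every $k$-subset $J\subseteq I$ contains its least element $j_0\le n-k$, whence $\mu_{j_0}\le\mu_{n-k}\le\mu_{n-k+1}=b$ and $\prod_{j\in J}\mu_j\le b\prod_{j\in J\setminus\{j_0\}}\mu_j$; summing over $J$, and noting that each $(k-1)$-subset of $I$ equals $J\setminus\{j_0\}$ for at most $n-1$ sets $J$, gives the estimate, hence $s\le(n-1)/n$. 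As $\Phi_0$ is increasing on $[0,1]$,
$$
\tau:=\tfrac12\Bigl(1+\Phi_0\bigl(\tfrac{n-1}{n}\bigr)\Bigr)\in(0,1)
$$
depends only on $n,k,p$; we then fix $\delta=\delta(n,k,p)>0$ small enough that $\max_{[0,(n-1)/n]}\Phi_\delta\le\tau$, and finally choose $w$, $\rho$ and the mollification scale small enough (depending on $\A$ and $Q$) that the errors discarded above sit below the thresholds demanded in \eqref{prop3} and \eqref{prop4}. The principal obstacle is precisely this uniform control of $s$; the remaining ingredients — the one-dimensional mollification of the square wave and the boundary cut-off, arranged so that $\spt g\subseteq Q$ and all the pointwise inequalities persist — are routine.
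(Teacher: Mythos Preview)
Your proof is correct and self-contained, whereas the paper does not argue the lemma directly but cites Lemma~1 and (3.23) of \cite{LM1}. The construction you give --- a mollified one-dimensional square-wave oscillation along the eigendirection of the critical singular value $b=\mu_{n-k+1}$, cut off near $\partial Q$ --- is the same mechanism as in that reference. The key quantitative step, bounding $s=\gamma/(\beta+\gamma)$ away from $1$ uniformly via the combinatorial inequality $S_k(Y)\le(n-1)\,b\,S_{k-1}(Y)$, is cleanly argued and is precisely what secures a $\tau$ depending only on $n,k,p$.
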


By a well-known result of polar decomposition, see \cite[Section 3.2, theorem 2]{EG}, we can write $\B=O\circ \A$ where $\A=\sqrt{\B^T \B}$ and $O:\rn\to \rn$ is an orthogonal map. We then apply Lemma \ref{l:main} to $\A$ and define
$$
h:=O\circ \nabla g.
$$
Therefore, $h\in \C^1(\rn, \rn)$ with support in $Q$ and 
$$
\nabla h =O \nabla^2 g.
$$
Moreover, 
$$
(\B+\nabla h)^T (\B+\nabla h)=(\A+\nabla^2 g)^T(\A+\nabla^2 g)=(\A+\nabla^2 g)^2
$$
Hence, 
$$
\ck{\B+\nabla h}=\ck{\A+\nabla^2 g}.
$$
With the above remark, it follows immediate from \ref{l:main},
\begin{corollary}\label{c:main}
Let $k\in\{2,\dots,n\}$, $1\le p<k$ and $\ep_0>0$.
Let $\B$ be a linear mapping
and $Q\subset\rn$ be a closed cube. 
Then there exist $\tau\in (0,1)$
depending only on $n$, $k$ and $p$ and
a function $h\in \C^1(\rn, \rn)$ with support in $Q$ such that 
such that 
\eqn{cprop01}
$$
|h(x)|\le\ep_0,\qquad x\in Q,
$$
\eqn{prop11}
$$
\|\nabla^2 h(x)\|\le\|\B\|,\qquad x\in Q,
$$
\eqn{prop21}
$$
\|\nabla h(x)\|^k\le \ck{\B},\qquad x\in Q,
$$
\eqn{prop31} 
$$
\int_Q \ck{\B+\nabla h(x)}^{p/k}\,dx\le \tau|Q|\ck{\B}^{p/k},
$$
and
\eqn{prop41}
$$
\bigg|\int_Q \ck{\B+\nabla h(x)}-\ck{\B}\,dx\biggr| \le \ep_0|Q|,
$$
\end{corollary}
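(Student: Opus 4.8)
The plan is to deduce Corollary \ref{c:main} from Lemma \ref{l:main} by a polar decomposition, trading second-derivative information about a scalar building block for first-derivative information about a vector field. First I would write $\B=O\A$, where $\A:=\sqrt{\B^T\B}$ is symmetric and positive semidefinite and $O$ is an orthogonal matrix (see \cite[Section 3.2, Theorem 2]{EG}). Since $\A$ is symmetric, Lemma \ref{l:main} applies to $\A$, the cube $Q$ and the given $\ep_0$, producing a constant $\tau\in(0,1)$ depending only on $n$, $k$, $p$ and a function $g\in\C^2(\rn)$ supported in $Q$ that satisfies \eqref{prop0}--\eqref{prop4}. I would then set $h:=O\circ\nabla g$; this is of class $\C^1(\rn,\rn)$, is supported in $Q$, and satisfies $\nabla h=O\,\nabla^2 g$.

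The remaining work is to verify that the orthogonal factor $O$ leaves all the relevant singular-value quantities unchanged. Since $\A$ is symmetric and positive semidefinite, its eigenvalues are exactly the singular values of $\B$, so $\|\A\|=\|\B\|$ and $\ck{\A}=\ck{\B}$; combined with the identity $\|OM\|=\|M\|$ for an arbitrary matrix $M$, this gives \eqref{cprop01} from \eqref{prop0} (via $|h|=|\nabla g|$), the bound on $\|\nabla h\|$ in \eqref{prop11} from \eqref{prop1}, and \eqref{prop21} from \eqref{prop2} (via $\|\nabla h\|^k=\|\nabla^2 g\|^k$). For the two integral estimates the point is the identity
$$
(\B+\nabla h)^T(\B+\nabla h)=(\A+\nabla^2 g)^TO^TO(\A+\nabla^2 g)=(\A+\nabla^2 g)^2,
$$
which uses that $\A+\nabla^2 g$ is symmetric. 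Hence $\sqrt{(\B+\nabla h)^T(\B+\nabla h)}=|\A+\nabla^2 g|$, whose eigenvalues are $|\lambda_i(\A+\nabla^2 g)|$; since $\ck{\cdot}$ of a symmetric matrix equals $S_k$ of the absolute values of its eigenvalues, it follows that $\ck{\B+\nabla h(x)}=\ck{\A+\nabla^2 g(x)}$ for every $x\in Q$. Therefore \eqref{prop31} and \eqref{prop41} are just \eqref{prop3} and \eqref{prop4} rewritten, and the corollary follows.

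I do not anticipate a real obstacle: the corollary is essentially a restatement of Lemma \ref{l:main} in the language of first-order gradients. The only subtlety worth flagging is that $\A+\nabla^2 g$ need not be positive semidefinite even though $\A$ is, so one cannot directly identify its singular values with its eigenvalues --- one has to pass through the absolute value $\sqrt{M^2}=|M|$ of the symmetric matrix $M=\A+\nabla^2 g$. With that observation in place, each conclusion of Corollary \ref{c:main} reduces line by line to the corresponding conclusion of the lemma.
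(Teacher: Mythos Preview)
Your proposal is correct and follows essentially the same approach as the paper: the authors also write $\B=O\A$ via polar decomposition (citing the same result in \cite{EG}), apply Lemma~\ref{l:main} to the symmetric $\A$, set $h:=O\circ\nabla g$, and use the identity $(\B+\nabla h)^T(\B+\nabla h)=(\A+\nabla^2 g)^2$ to conclude $\ck{\B+\nabla h}=\ck{\A+\nabla^2 g}$. Your added remark that $\A+\nabla^2 g$ need not be positive semidefinite, so one must pass through the absolute value of this symmetric matrix, is a helpful clarification that the paper leaves implicit.
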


\section{The construction from a smooth initial function}

After some thoughts, which we postpone to Section \ref{s:concl},
the proof of Theorem \ref{t:main2} reduces to
the following lemma.

\begin{lemma}\label{l:main1} 
Let $Q_0=[0,1]^n$, $w\in C^2(Q_0)$, $1<p<k$ and $\ep>0$. Then there exists
$f\in W^{2,p}((0,1)^n)\cap c^{1,\alpha}(Q_0)$ such that $\rank \nabla^2f<k$ a.e.\
in $Q_0$,
$f=w$ and $\nabla f=\nabla w$
on $\partial Q_0$, $\|f\|_{2,p}\le C(\|w\|_{2,p}+1)$ (with $C=C(n,p,k)$) and 
$\|f-w\|_{C^{1,\alpha}}<\ep$.
\end{lemma}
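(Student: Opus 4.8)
The plan is to build $f$ from $w$ by an iterated Vitali-type covering argument, using Corollary \ref{c:main} (equivalently Lemma \ref{l:main}) as the building block on each small cube, and to control the $W^{2,p}$ norm and the $C^{1,\alpha}$ distance by a geometric-series / fast-decay bookkeeping. First I would fix a stage: on the cube $Q_0$ we have the initial datum $w$, whose Hessian $\A_0=\nabla^2 w$ is continuous, hence uniformly close on a small subcube $Q$ to the constant matrix $\A_0(x_Q)$ at the center. On each such $Q$ I would apply Lemma \ref{l:main} with $\A$ the (symmetric) value of the current Hessian at the center of $Q$ and with a smallness parameter $\ep_0$ to be chosen, obtaining a correction $g_Q\in C^2$ supported in $Q$; the new function is $w+\sum_Q g_Q$. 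The key gain is \eqref{prop3}: the quantity $\int \ck{\nabla^2 f}^{p/k}$ is multiplied by a factor $\tau<1$ at each stage, so after infinitely many iterations this $L^{p/k}$ quantity tends to $0$, which (together with a uniform $L^p$ bound on $\nabla^2 f$ coming from \eqref{prop1}–\eqref{prop2}) forces $\ck{\nabla^2 f}=0$ a.e., i.e. $\rank\nabla^2 f<k$ a.e. Meanwhile \eqref{prop0} gives that the $C^1$ size of each correction is at most $\ep_0$, so by choosing the $\ep_0$ at stage $j$ to decay geometrically I get $C^1$ (indeed $C^{1,\alpha}$, after an interpolation with the uniform $W^{2,p}$, hence $C^{1,\beta}$ for some $\beta>\alpha$, bound) convergence of the partial sums, with $\|f-w\|_{C^{1,\alpha}}<\ep$. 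The boundary conditions $f=w$, $\nabla f=\nabla w$ on $\partial Q_0$ are automatic since every correction is supported in the interior of a subcube.

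The iteration has to be organized carefully so that the new Hessian is again \emph{uniformly} approximable by piecewise constants, so that Lemma \ref{l:main} can be reapplied. I would proceed as follows. At stage $j$ we have $f_j\in C^2$ with $f_0=w$. Since $\nabla^2 f_j$ is uniformly continuous on $\overline{Q_0}$, choose a grid of dyadic subcubes $Q$ fine enough that $\osc_Q \nabla^2 f_j<\delta_j$ for a parameter $\delta_j\downarrow0$. On each $Q$ apply Lemma \ref{l:main} to $\A=\nabla^2 f_j(x_Q)$ with smallness parameter $\ep_0=\ep_0^{(j)}$, getting $g_Q$; set $f_{j+1}=f_j+\sum_Q g_Q$. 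Then \eqref{prop2} reads $\|\nabla^2 g_Q\|^k\le\ck{\nabla^2 f_j(x_Q)}$, which combined with the oscillation control and \eqref{prop1} keeps the full Hessian $\nabla^2 f_{j+1}$ bounded in $L^\infty$ on the set where $\ck{\nabla^2 f_j}$ is bounded, and bounded in $L^p$ globally with a constant independent of $j$ (this is where $1\le p<k$ enters: $\|\nabla^2 g_Q\|\le\ck{\nabla^2 f_j(x_Q)}^{1/k}$ is $L^p$-integrable against $|Q|$ because $p/k<1$ forces only a bounded multiple of the previous $L^{p/k}$ mass). Estimate \eqref{prop4} is what lets me compare $\int_Q\ck{\nabla^2 f_{j+1}}$ with the constant-matrix value $\ck{\nabla^2 f_j(x_Q)}$ up to an error $\ep_0^{(j)}|Q|$, and then, using the oscillation bound $\delta_j$ and continuity of $[\cdot]_k$, compare the latter with $\int_Q\ck{\nabla^2 f_j}$; summing over $Q$ and iterating \eqref{prop3} I get
$$
\int_{Q_0}\ck{\nabla^2 f_j}^{p/k}\,dx\le \tau^j\int_{Q_0}\ck{\nabla^2 w}^{p/k}\,dx + (\text{telescoping error}),
$$
where the error term is made summable and small by choosing $\ep_0^{(j)}$ and $\delta_j$ to decay fast. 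Fatou/weak-$L^1$ compactness then yields a limit $f$ with $\ck{\nabla^2 f}=0$ a.e.

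The two norm bounds are then routine bookkeeping. For $\|f\|_{2,p}\le C(\|w\|_{2,p}+1)$: $\|\nabla^2 f\|_{L^p}\le\|\nabla^2 w\|_{L^p}+\sum_j\|\sum_Q\nabla^2 g_Q\|_{L^p}$, and the $j$-th term is controlled, via \eqref{prop2} and $p<k$, by a constant times $\bigl(\int_{Q_0}\ck{\nabla^2 f_j}^{p/k}\bigr)^{1/p}\le C\tau^{j/p}(\cdots)$, a convergent series; the lower-order terms $\|f\|_{L^p},\|\nabla f\|_{L^p}$ are handled by \eqref{prop0} with $\ep_0^{(j)}$ summable. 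For $\|f-w\|_{C^{1,\alpha}}<\ep$: \eqref{prop0} gives $\|f_{j+1}-f_j\|_{C^1}\le C\ep_0^{(j)}$; to upgrade to $C^{1,\alpha}$ I interpolate between this $C^1$ smallness and a uniform $C^{1,\gamma}$ bound for each tail $f-f_j$ (which follows from the uniform $W^{2,p}$ bound when $p>n$, or in general requires the extra care that each $g_Q$ is $C^2$ but with possibly large Hessian — here one instead uses that on the cube $Q$, $\|\nabla g_Q\|_{\infty}\le\ep_0^{(j)}$ and $\|\nabla^2 g_Q\|_\infty\le\|\nabla^2 f_j(x_Q)\|$, giving a $C^{1,\alpha}$ modulus bound by a $C^1$–$C^{1,1}$ interpolation scaled by the cube size, which can be absorbed by taking the grid fine enough). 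Choosing $\sum_j C\ep_0^{(j)}<\ep$ finishes it; the little-H\"older membership $f\in c^{1,\alpha}$ follows because $f$ is a $C^{1,\alpha}$-limit of the $C^2$ (hence $c^{1,\alpha}$) functions $f_j$ and $c^{1,\alpha}$ is closed.

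\medskip
\noindent\textbf{Main obstacle.} The delicate point is the simultaneous control, at every stage, of (i) the multiplicative decay of $\int\ck{\nabla^2 f_j}^{p/k}$ and (ii) the $C^{1,\alpha}$ (not merely $C^1$) convergence, while (iii) keeping $\|\nabla^2 f_j\|_{L^p}$ bounded uniformly in $j$. These pull against each other: forcing the correction's Hessian to be large enough to kill a $k\times k$ minor (the content of \eqref{prop2}–\eqref{prop3}) is in tension with $C^{1,\alpha}$ smallness of the correction, and the reconciliation is precisely that the Hessian bound \eqref{prop2} is governed by $\ck{\A}^{1/k}$, which is $L^{p/k}$-controlled with $p/k<1$, so the $L^p$ mass of the Hessian is preserved while its "amplitude" contributes negligibly to the $C^{1,\alpha}$ norm once rescaled to a fine grid. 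Getting the bookkeeping of $\ep_0^{(j)},\delta_j$, and the grid mesh at stage $j$ consistent is the real work; everything else is standard.
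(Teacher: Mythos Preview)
Your plan is correct and matches the paper's proof essentially step for step: iterate Lemma \ref{l:main} on successively refined grids with $\A=\nabla^2 f_{j-1}(x_Q)$, use \eqref{prop3} for the geometric decay of $\int_{Q_0}[\nabla^2 f_j]_k^{p/k}$, \eqref{prop2} for the $L^p$ bound on $\nabla^2 g_j$, and the $C^1$--$C^{1,1}$ interpolation $\|g_j\|_{C^{1,\alpha}}\le C\|\nabla^2 g_j\|_\infty^{\alpha}\|\nabla g_j\|_\infty^{1-\alpha}\le CK_j^{\alpha}\ep_j^{1-\alpha}$ (with $\ep_j$ chosen \emph{after} $K_j=\sup_{Q_0}\|\nabla^2 f_{j-1}\|$ is known) for the $C^{1,\alpha}$ smallness. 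The only imprecision is the aside about ``scaled by the cube size'': the mesh does not enter this interpolation, and the Sobolev-embedding route you mention first is indeed unavailable since $p<k\le n$; the paper uses exactly the second option you describe.
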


\begin{remark}\label{null}
%In this case, we do not need to replace the initial function by a smooth approximation. 
For the case $p\geq k$, the null-Lagrangian behavior implies that the integral of the $k\times k$ minors of $D^2f$ must be the same as those of $D^2w$. Our result shows that the null-Lagrangian effect fails
when $p<k$.
\end{remark}

\subsection{The plan of the construction}\label{s:plan}
%Choose $h\in c^{1,\alpha}\cap W^{2,p}(\Omega)$, where $p\in [1,k)$ and $\alpha\in [0,1)$. 
From now on, $C$ will denote a general constant that possibly depends on $w, n, k, \alpha$ and varies line by line.
%Given $\beta>0$, we can choose $\delta$ sufficient small that 
%\eqn{delta0}
%$$
%\|u-u*\rho_\delta\|_{1,\alpha}\leq \tfrac{\beta}{2}, \textrm{ and } \|u-u*\rho_\delta\|_{W^{2,p}}\leq %\tfrac{\beta}{2}.
%$$
%After fixing such $\delta$, we denote $f_0:=u*\rho_\delta$ and perform our construction on $Q_0:=[0,1]^n$.
% and keeping in mind that $f_0\in C^\infty(\Omega)$, $\|f_0\|_{2,0}\leq C\delta^{1-\alpha}$ and  the $L^p$ norm of $\nabla^2 f_0$ is finite. 
%\eqn{lp2}
%$$
%\|\nabla^2 f_0\|_{p}\leq C. 
%$$
Consider the partitions
\eqn{partition}
$$
\Q_j:=\biggl\{
\Bigl[\frac{z_1-1}{m_j},\;\frac{z_1}{m_j}\Bigr]\times\dots\times 
\Bigl[\frac{z_n-1}{m_j},\;\frac{z_n}{m_j}\Bigr]:\;z\in \{0,\dots,m_j\}^n
\biggr\},
$$
where integers $m_j$ will be specified later.
Set $f_0=w$.
Let $\tau\in (0,1)$ be from Lemma \ref{l:main}. Let $\ep_j\in (0, \tau^j/3)$ be chosen later.
We claim that we can use induction to
construct a sequence $(g_j)_j$  of $C^2$ mappings on
$\rn$ such that, for each $j=1,2,\dots$ and $Q\in\Q_j$, both $g_j$ and $\nabla g_j$ vanish on $\partial Q_j$, and
the functions $g_j$ and 
\eqn{howfj}
$$
f_j:=f_0+g_1+\dots+g_j,\qquad j=1,2\dots
$$
satisfy
\eqn{cprop0}
$$
|g_j(x)|+|\nabla g_j(x)|\le\ep_j,\qquad x\in Q_0,
$$
\eqn{cprop2}
$$
\|\nabla^2 g_j(x)\|^p\le  
\ck{\nabla^2 f_{j-1}(x)}^{p/k}+\tau^{j},\qquad x\in Q_0,
$$
\eqn{cprop3} 
$$
\int_{Q_0} \ck{\nabla^2 f_j(x)}^{p/k}\,dx\le \tau
\int_{Q_0} \ck{\nabla^2 f_{j-1}(x)}^{p/k}\,dx+\tau^{j},
$$
and 
\eqn{cprop4}
$$
\biggl|\int_{Q} \ck{\nabla^2 f_j(x)}-\ck{\nabla^2 f_{j-1}(x)}\,dx\biggr|\le \tau^j|Q|,\qquad Q\in\Q_j.
$$
If we succeed in constructing the sequence $f_j$, we use \eqref{cprop0}
to establish existence of the limit function
$$
f=\lim_{j\to\infty}f_j.
$$

\subsection{Details of the construction}
Throughout we use uniform continuity of $\nabla^2f_{j-1}$ on $Q_0$. 
Let 
$$
K_j=\sup\{\|\nabla^2f_{j-1}\|:\;x\in Q_0\}.
$$
Since continuous functions on $\er_{\sym}^{n\times n}$
are locally uniformly continuous, there exists $\delta_j>0$ such that
for each $\M',\M\in\er_{\sym}^{n\times n}$ we have
\eqn{deltaj1}
$$
\left.
\aligned
&\|\M\|\le 2K_j,\\
&\|\M'-\M\|<\delta_j
\endaligned\right\}
\!\!\implies \!\!
\left\{
\aligned
&|\ck{\M'}^{p/k}-\ck{\M}^{p/k}|<\tfrac 12 \tau^j,
 \textrm{  and}\\
&|\ck{\M'}-\ck{\M}|<\tfrac 13\tau^2
%&|L_k(\M')-L_k(\M)|\le <\tfrac 12 \tau^j. \\
\endaligned
\right.
$$
Then we find $\beta_j>0$ such that for each $x,y\in Q_0$ we have
\eqn{betaj}
$$
|y-x|<\beta_j\implies \|\nabla^2 f_{j-1}(y)-\nabla^2 f_{j-1}(x)\|<\delta_j.
$$
Find $m_j\in\en$ such that $m_j$ is a multiple of $m_{j-1}$ (if $j\ge 2$),
\eqn{geom}
$$
m_j\ge 2^j,
$$
and
\eqn{diam}
$$
\frac{\sqrt n}{m_j}<\min\bigl\{\tfrac12\ep_j,\;\beta_j\bigr\},
$$
and consider the partition \eqref{partition}.
For each $Q\in\Q_j$ let $x_{Q}$ be the center of $Q$ and set
$$
\A_Q:=\nabla^2 f_{j-1}(x_Q).
$$
We use Lemma \ref{l:main} to find a function $g_Q\in\C^2(\rn)$ with support in
$Q$ such that $g=g_Q$ satisfies \eqref{prop0}--\eqref{prop4} with
$\ep_0=\ep_j$ and $\A=\A_Q$.
Set
$$
g_j(x):=g_Q(x),\quad \textrm{for } x\in Q\in \Q_j.
$$
Then \eqref{cprop0} follows directly from \eqref{prop0}.

\subsection{Property \eqref{cprop2}}
If $x\in Q\in\Q_j$, then
$$
|x-x_Q|\le \diam Q\le \beta_j
$$
and thus
\eqn{osc}
$$
\|\nabla^2 f_{j-1}(x)-\A_Q\|<\delta_j. 
$$
By \eqref{deltaj1},
$$
\aligned
\ck{\A_Q}^{p/k}&\le \ck{\nabla^2f_{j-1}(x)}^{p/k}+|\ck{\nabla^2f_{j-1}(x)}^{p/k}-\ck{\A_Q}^{p/k}|
\\&\le  \ck{\nabla^2f_{j-1}(x)}^{p/k}+\tfrac 12 \tau^j.
\endaligned
$$
Using \eqref{prop2} we infer that 
$$
\|\nabla^2g_j(x)\|^p \le \ck{\A_Q}^{p/k}
%\\&\le  \Bigl(\ck{\nabla^2f_{j-1}(x)}+(\tfrac 12 \tau^j)^{k/p}\Bigr)^{p/k}
\le \ck{\nabla^2f_{j-1}(x)}^{p/k}+\tau^{j}. 
$$

\subsection{Property \eqref{cprop3}}
If $x\in Q\in\Q_j$, then by \eqref{prop1} 
$$
\|\nabla^2g_j(x)\|\le 
\|\A_Q\|\le K_j,
$$
thus
$$
\|\nabla^2f_j(x)\|\le 2K_j.
$$
By \eqref{deltaj1} and \eqref{osc}, 
$$
\aligned
&|\ck{\nabla^2 f_{j-1}(x)}^{p/k}-\ck{\A_{Q}}^{p/k}|\le \tfrac 12 \tau^j, \textrm{  and,}\\
&|\ck{\nabla^2 f_{j}(x)}^{p/k}-\ck{\A_{Q}+\nabla^2g_j(x)}^{p/k}|<\tfrac 12 \tau^j.
\endaligned
$$
Integrating with respect to $x\in Q$  we obtain
\eqn{fin1}
$$
\aligned
\int_Q\ck{\nabla^2 f_{j}(x)}^{p/k}\,dx&\le 
\int_Q\ck{\A_{Q}{+}\nabla^2g_j(x)}^{p/k}\,dx
\\&\qquad +\tfrac 12 \tau^j|Q|
\endaligned
$$
and
\eqn{fin2}
$$
\int_Q\ck{\nabla^2f_{j-1}(x)}^{p/k}\,dx
\ge \ck{\A_{Q}}^{p/k}|Q|
-\tfrac 12 \tau^j|Q|
$$
From \eqref{fin1}, \eqref{prop3} and \eqref{fin2} we obtain
\eqn{fin3}
$$
\aligned
&\int_Q\ck{\nabla^2 f_{j}(x)}^{p/k}\,dx\le 
\int_Q\ck{\A_{Q}{+}\nabla^2g_j(x)}^{p/k}\,dx
+|Q|\tfrac 12 \tau^j
\\&\le\tau\ck{\A_Q}^{p/k}\,|Q|
%\\&\qquad 
+\tfrac 12 \tau^j|Q|
\\&\quad\le \tau\int_Q\ck{\nabla^2f_{j-1}(x)}^{p/k}\,dx +\tau^j |Q|.
\endaligned
$$
%Summing over $Q\in\Q_j$ we conclude \eqref{cprop3}.

\subsection{Property \eqref{cprop4}}
Similarly by \eqref{deltaj1} and \eqref{osc}, 
$$
\aligned
&|\ck{\nabla^2 f_{j-1}(x)}-\ck{\A_{Q}}|\le \tfrac 13 \tau^j, \textrm{  and,}\\
&|\ck{\nabla^2 f_{j}(x)}-\ck{\A_{Q}+\nabla^2g_j(x)}|<\tfrac 13 \tau^j.
\endaligned
$$
From \eqref{prop4} with $\ep_0$ now replaced by $\ep_j\in (0, \tau^j/3)$, we obtain that
\eqn{meas}
$$
\aligned
&\biggl|\int_Q\ck{\nabla^2 f_{j}(x)}-\ck{\nabla^2f_{j-1}(x)}\,dx\biggr|
\\&\le 
\biggl|\int_Q\ck{\A_{Q}{+}\nabla^2g_j(x)}-\ck{\nabla^2f_{j-1}(x)}\,dx\biggr|
+\tfrac 13 \tau^j|Q|
\\&\le \biggl|\int_Q\ck{\A_{Q}{+}\nabla^2g_j(x)}-\ck{\A_Q}\,dx\biggr|
\\&+ \biggl|\int_Q\ck{\A_Q}-\ck{\nabla^2f_{j-1}(x)}\,dx\biggr|
+\tfrac 13 \tau^j|Q|
\\&\le\ep_j|Q|+\tfrac 13 \tau^j|Q|+\tfrac 13 \tau^j|Q|\le \tau^j|Q|.
\endaligned
$$
Summing over $Q\in\Q_j$ we conclude \eqref{cprop4}.

\section{Sobolev estimate}
Since $\ck{\A}\le \binom nk\|\A\|^k$ holds for any $\A\in\er_{\sym}^{n\times n}$, applying \eqref{cprop3} iteratively, together with the finiteness of $\|\nabla^2 f_0\|_p$ we obtain
\eqn{iteration}
$$
\aligned
&\int_{Q_0} \ck{\nabla^2 f_j(x)}^{p/k}\,dx\le \tau^j
\int_{Q_0} \ck{\nabla^2 f_0(x)}^{p/k}\,dx+j\tau^j
\\&\le \tau^j\binom nk\|\nabla^2 f_0\|_p^p +j\tau^j\to 0 \quad \textrm{as }j\to \infty.
\endaligned
$$
By \eqref{cprop2},
\eqn{grad}
$$
\aligned
\int_{Q_0}|\nabla^2 g_j(x)|^p\,dx&\le
\int_{Q_0}\ck{\nabla^2 f_{j-1}(x)}^{p/k}\,dx+ \tau^j
\\&\le \tau^{j-1}\binom nk
\|\nabla^2 f_0\|_p^p\,dx+j\tau^{j-1}. 
\endaligned
$$
Note that 
$$
\sum_j j\tau^{j-1} <\infty
$$
and $\|\nabla^2 f_0\|_p$ is finite, thus $\sum_jg_j$ converges in $W_0^{2,p}(Q_0)$
(and the sum can be identified as $f-f_0$). It follows
that 
$$
f=\lim_{j\to\infty} f_j=f_0+\sum_{j=1}^{\infty}g_j\in W^{2,p}(Q_0),
$$
with
\eqn{gradb}
$$
\|\nabla^2 f\|_p^p\leq C_1 \|\nabla^2 f_0\|_p^p+C_2.
$$

\section{The rank property}\label{s:rank}

Since $\ck{\A}\le \binom nk\|\A\|^k$ holds for any $\A\in\er_{\sym}^{n\times n}$,
from \eqref{iteration} and \eqref{grad} we infer that
$$
\int_{Q_0}\ck{\nabla^2 f(x)}^{p/k}\,dx=\lim_{j\to\infty}
\int_{Q_0}\ck{\nabla^2 f_{j-1}(x)}^{p/k}\,dx=0
$$
(Alternatively, we could use Fatou's lemma). Therefore, $\ck{\nabla^2f}=0$
a.e., which means that $\rank\nabla^2f<k$ a.e.

\section{H\"older $C^{1,\alpha}$ estimate} 
Denote
$$
K_j=2\sup\{\|\nabla^2f_{j-1}(x)\|:\;x\in Q_0\},
$$
and from \eqref{prop1} 
$$
\|\nabla^2g_j(x)\|\le 
\|\A_Q\|\le \tfrac12\, K_j,\qquad x\in Q_0.
$$
On the other hand, from \eqref{cprop0}, 
$$
\|\nabla g_j(x)\|\le \ep_j,\qquad x\in Q_0.
$$
By interpolation,
$$
\|g_j\|_{1,\alpha}\le \|\nabla^2g_j\|_{\infty}^{\alpha}\|2\nabla g_j(x)\|_{\infty}^{1-\alpha}\le K_j^{\alpha}\ep_j^{1-\alpha}.
$$
Choose $\ep_j$ so that $K_j^{\alpha}\ep_j^{1-\alpha}<\ep(1-\tau)\tau^j$, 
$$
\|f-f_0\|_{1,\alpha}\le \frac{\ep}{1-\tau}\sum_j \tau^j \leq \ep. 
$$

Since $f_j$ are smooth, this implies $f\in c^{1,\alpha}(Q_0)$.

\section{The Hessian measure}\label{s:heme}

Let $f$ be the function from Lemma \ref{l:main1}.
In this section we will show that the (vanishing) pointwise $k\times k$ minors of $\nabla^2f$
are only singular parts of ``Hessian measures'', which carry some additional information,
and, in particular, explain why there is a failure of rigidity (or ``null Lagrangian property'')
for $f$.

We obtain the Hessian measure as a result of a relaxation procedure in spirit of 
Marcellini \cite{Mar}, Fonseca and Marcellini \cite{FMa}, Fonseca and Mal\'y \cite{FM}, 
\cite{FM1}
and Bouchitt\'e, Fonseca and Mal\'y \cite{BFM}.
Let us emphasize that
it is not aim to prove any general relaxation result, rather we want to have our reasoning 
as simple as possible. 
Therefore we require uniform convergence of gradient in the definition
of the convergence used for relaxation. See also Remark \ref{r:FM}.

The key observation is that the function $\ck{\cdot}$ is polyconvex,
see \cite[Theorem 5.39]{Dac1}.

Let $\Omega\subset\rn$ be an open set and $u\in W^{2,p}(\Omega)\cap C^1(\Omega)$.
For and open set $U\subset\Omega$ we define
$\U(u,U)$ as the family
of all sequences $(u_j)_j$ of $C^2$ functions which converge to $u$ 
in  $C^1(U)$ and weakly in $W^{2,p}(U)$, and 
$$
\F(u,U)=\inf\Bigl\{\liminf_j\int_{U}\ck{\nabla^2 u_j(x)}\,dx:\;
(u_j)_j\in \U(u,U)\Bigr\}.
$$
We say that $\boldsymbol\mu_k$ is the \textit{$k$-th positive Hessian measure}
of $u$ if for each open set $U\subset\subset\Omega$ we have
$$
\boldsymbol\mu_k(U)\le \F(u,U)\le \boldsymbol\mu_k(\overline U).
$$
Such a measure $\boldsymbol\mu_k$ is obviously unique if it exists.

We prove the following 

\begin{thm} Let $f$ be the function from  Lemma \ref{l:main1}, with $p>k-1$.
Then there exists
a $k$-th positive Hessian measure $\boldsymbol\mu_k$ of $f$. Moreover, 
$\boldsymbol\mu_k$ is obtained as the weak* limit in $C(Q_0)^*$ of the sequence
$\ck{\nabla^2 f_j}$ where $f_j$ are as in \eqref{howfj}.
\end{thm}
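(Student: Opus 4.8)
The plan is to show that the sequence of nonnegative measures $\nu_j := \ck{\nabla^2 f_j}\,\mathcal{L}^n\lfloor Q_0$ converges weak* in $C(Q_0)^*$ to a limit measure $\boldsymbol\mu_k$, and then to verify that this $\boldsymbol\mu_k$ satisfies both the lower and upper bound in the definition of the $k$-th positive Hessian measure. First I would establish precompactness: by \eqref{cprop4}, for every $Q\in\Q_j$ we have $\nu_j(Q)\le \nu_{j-1}(Q)+\tau^j|Q|$, and since $\Q_j$ refines $\Q_{j-1}$, summing gives $\nu_j(Q_0)\le \nu_{j-1}(Q_0)+\tau^j$, hence $\sup_j\nu_j(Q_0)<\infty$. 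Thus $(\nu_j)_j$ has weak* cluster points. To get an actual limit rather than just cluster points, I would use \eqref{cprop4} more carefully: it controls the increments of $\nu_j$ on the dyadic-type cubes of $\Q_j$, so for any fixed cube $Q\in\Q_{j_0}$ the sequence $j\mapsto \nu_j(Q)$ (for $j\ge j_0$, using that $m_j$ is a multiple of $m_{j_0}$ so $Q$ is a union of cubes from $\Q_j$) is Cauchy with increments bounded by $\sum_{j>j_0}\tau^j|Q|$. Since such cubes generate the topology, a standard argument (testing against continuous functions uniformly approximated by functions constant on the cubes of $\Q_{j_0}$, combined with the uniform mass bound) shows $\nu_j \rightharpoonup \boldsymbol\mu_k$ weak* for a single limit measure $\boldsymbol\mu_k\ge 0$.

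Next I would prove the upper bound $\F(f,U)\le \boldsymbol\mu_k(\overline U)$ for $U\subset\subset Q_0$. This is the easy direction: the sequence $(f_j)_j$ is itself admissible, i.e. $(f_j)_j\in\U(f,U)$. Indeed $f_j\to f$ in $C^1$ by the $C^{1,\alpha}$ estimate (the interpolation bound $\|g_j\|_{1,\alpha}\le K_j^\alpha\ep_j^{1-\alpha}$ summable), and $f_j\rightharpoonup f$ weakly in $W^{2,p}$ by the Sobolev estimate (the tail $\sum_{j}g_j$ converges in $W_0^{2,p}$, which gives norm-convergence, a fortiori weak convergence). Here the hypothesis $p>k-1$ is needed so that $\ck{\cdot}$, which has growth of order $k$ in $\|\cdot\|$ but is controlled by a product of $k$ singular values, is dominated in a way compatible with weak $W^{2,p}$ convergence of the relevant minors; more precisely polyconvexity of $\ck{\cdot}$ together with $p>k-1$ lets one pass to limits in the $(k-1)\times(k-1)$ subdeterminants appearing in its polyconvex representation. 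Then $\F(f,U)\le \liminf_j\int_U\ck{\nabla^2 f_j} = \liminf_j \nu_j(U)\le \limsup_j\nu_j(U)\le \boldsymbol\mu_k(\overline U)$ by the portmanteau theorem for weak* convergence of measures on the closed set $\overline U$.

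For the lower bound $\boldsymbol\mu_k(U)\le\F(f,U)$, let $(v_j)_j\in\U(f,U)$ be arbitrary. The key tool is the polyconvexity of $\ck{\cdot}$ noted in the excerpt (\cite[Theorem 5.39]{Dac1}): $\ck{\nabla^2 v}$ can be written as a convex function of $\nabla^2 v$ and its $2\times 2,\dots,(k-1)\times(k-1)$ minors, all of which are null Lagrangians. Under $v_j\to f$ in $C^1$ and $\nabla^2 v_j\rightharpoonup\nabla^2 f$ in $L^p$ with $p>k-1$, these lower-order Hessian minors converge weakly (as distributional Jacobian-type minors, which is where $C^1$ convergence of $\nabla v_j$ plus weak $W^{2,p}$ control enters — exactly the setting where uniform gradient convergence was imposed in the definition), so weak* lower semicontinuity of polyconvex integrands gives $\liminf_j\int_U\ck{\nabla^2 v_j}\ge \int_U\ck{\nabla^2 f}$. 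But this only yields $\F(f,U)\ge\int_U\ck{\nabla^2 f}=0$, which is too weak. To recover the measure, one tests on a subdomain: applying the same lsc inequality on every open $V$ with $\overline V\subset U$ against the sequence $(v_j)$ one in fact compares $(v_j)$ with the fixed sequence $(f_j)$ via the building-block structure. The honest argument is: on each cube $Q\in\Q_{j_0}$, the recursive construction shows $\nu_{j_0}(Q)$ is, up to an error $\sum_{j>j_0}\tau^j|Q|$, a lower bound for $\liminf_j\int_Q\ck{\nabla^2 v_j}$ for \emph{any} admissible $(v_j)$; this is because replacing $v_j$ near scale $m_{j_0}$ does not decrease the total $\ck{\cdot}$-mass below what the polyconvexity/lsc forces relative to the frozen affine pieces $\A_Q$. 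Summing over $Q\in\Q_{j_0}$ covering a given $U\subset\subset Q_0$ and letting $j_0\to\infty$ gives $\F(f,U)\ge \lim_{j_0}\nu_{j_0}(\text{cubes in }U)=\boldsymbol\mu_k(U)$ (using weak* convergence again and $U$ open).

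The main obstacle I anticipate is precisely this last point: proving that no admissible competing sequence $(v_j)$ can have strictly smaller limiting $\ck{\cdot}$-mass than the constructed sequence on a given open set — i.e., the genuine lower-semicontinuity-to-a-measure estimate, as opposed to the trivial $\F(f,U)\ge 0$. The resolution should combine (i) polyconvexity and weak* lsc of $\int\ck{\nabla^2\cdot}$ under $C^1$-plus-weak-$W^{2,p}$ convergence with $p>k-1$, applied localized to each cube, and (ii) the quantitative control \eqref{cprop4}, which pins down $\nu_j(Q)$ along the specific sequence so that its limit is reproduced by the infimum. All other steps — mass bounds, existence of the weak* limit, and the upper bound via $(f_j)$ itself — are routine given the estimates already proved in the preceding sections.
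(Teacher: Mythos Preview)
Your overall architecture matches the paper: establish weak* convergence of $\nu_j=\ck{\nabla^2 f_j}\,dx$ via \eqref{cprop4}, get the upper bound by testing with $(f_j)$ itself, and get the lower bound cube by cube at a frozen level $l$ using polyconvexity relative to the matrices $\A_Q$. However, two points need correction.

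First, you misplace the hypothesis $p>k-1$. The upper bound $\F(f,U)\le\boldsymbol\mu_k(\overline U)$ requires nothing beyond $(f_j)\in\U(f,U)$ and portmanteau; no minors need to converge. The place $p>k-1$ is genuinely used is in the \emph{lower} bound, and not via any weak convergence of $(k{-}1)\times(k{-}1)$ minors either. In the paper, on each $Q\in\Q_l$ one interpolates the competitor to the second-order Taylor polynomial $a_Q(x)=f_{l-1}(x_Q)+\nabla f_{l-1}(x_Q)(x-x_Q)+\tfrac12\A_Q(x-x_Q)\cdot(x-x_Q)$ by setting $v_j=a_Q+\eta(u_j-a_Q)$ with a cut-off $\eta$ supported in $Q$ and equal to $1$ on a slightly smaller cube $P$. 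Because $v_j\equiv a_Q$ near $\partial Q$, quasiconvexity (a consequence of polyconvexity) gives $|Q|\,\ck{\A_Q}\le\int_Q\ck{\nabla^2 v_j}$. The error between $\ck{\nabla^2 v_j}$ and $\ck{\nabla^2 u_j}$ lives on the boundary layer $Q\setminus P$ and is controlled by $\int_{Q\setminus P}(1+|\nabla^2 u_j|^{k-1})$; H\"older with exponent $p/(k-1)$ turns this into $C\gamma^{(p-k+1)/p}\|\nabla^2 u_j\|_p$, which is where $p>k-1$ enters. Your statement that the resolution is ``polyconvexity and weak* lsc \dots\ applied localized to each cube'' is too vague and, as you yourself noticed, naive lsc only yields $\F(f,U)\ge 0$; the missing idea is precisely this boundary-matching modification that converts polyconvexity into a pointwise-at-$\A_Q$ quasiconvexity inequality, together with the boundary-layer H\"older estimate.

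Second, a minor point on Step~1: your argument via Cauchy increments on dyadic cubes works, but to conclude weak* convergence in $C(Q_0)^*$ you need to pass from cubes to continuous test functions. The paper does this directly: for $\ff\in C^1(Q_0)$ it freezes $\ff$ at the cube centers, uses \eqref{cprop4} for the frozen part, and bounds the remainder by $\diam Q\cdot\|\nabla\ff\|_\infty\cdot(\|\nu_j\|+\|\nu_{j-1}\|)$ with $\diam Q\le C2^{-j}$ from \eqref{geom}. Your approximation-by-step-functions route is equivalent once you invoke the uniform mass bound, so this is not a gap, just a slightly different bookkeeping.
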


%Then the integral of $L_k$ is a null Lagrangian, this means that
%$$
%\int_{U}L_k(\nabla^2 u(x))\,dx=\int_{G}L_k(\nabla^2 v(x))\,dx
%$$
%if $G$ is an open set and $u,v\in W^{2,k}(G)$, $u-v\in W_0^{2,k}(G)$.
%Throughout this section we assume that a small refinement of the construction
%is made, namely 
%\eqn{mod-deltaj1}
%$$
%\text{\eqref{deltaj1} holds with }L_k\text{ in place of }\ck{\cdot}.
%$$

\begin{proof}
{\sc Step 1.}
The property \ref{cprop4} implies boundedness of $(\ck{\nabla^2 f_j})_j$ in $L^1$.
Namely,
summing this estimate over $Q\in \Q_j$, we obtain
$$
\|\,\ck{\nabla^2 f_j}\,\|_1\le K,\qquad j=1,2,\dots,
$$
where
$$
K=\|\,\ck{\nabla^2 f_0}\,\|_1+\tau+\tau^2+\dots.
$$
Choose $\ff\in C^1(Q_0)$.
 and $j\in\en$. 
For each $Q\in\Q_j$,
$$
\aligned
&\Bigl|\int_Q(\ck{\nabla^2f_j(x)}-\ck{\nabla^2f_{j-1}(x)})\ff(x)\,dx\Bigr|
\\&\qquad
\le \Bigl|\int_Q(\ck{\nabla^2f_j(x)}-\ck{\nabla^2f_{j-1}(x)})\ff(x_Q)\,dx\Bigr|
\\&\qquad\qquad
+ \Bigl|\int_Q(\ck{\nabla^2f_j(x)}-\ck{\nabla^2f_{j-1}(x)})(\ff(x)-\ff(x_Q))\,dx\Bigr|
\\&\qquad
\le \|\ff\|_{\infty}\tau^j|Q|
\\&\qquad\qquad
+\diam Q\|\nabla\ff\|_{\infty}\;\int_Q(\ck{\nabla^2f_j(x)}+\ck{\nabla^2f_{j-1}(x)})\,dx.
\endaligned
$$
Summing over $Q\in\Q_j$ and using \eqref{geom} we obtain
$$
\aligned
&\Bigl|\int_{Q_0}(\ck{\nabla^2f_j(x)}-\ck{\nabla^2f_{j-1}(x)})\ff(x)\,dx\Bigr|
\\&\qquad 
\le
\|\ff\|_{\infty}\tau^j+CK2^{-j}\|\nabla\ff\|_{\infty}.
\endaligned
$$
It follows that the sequence $(\ck{\nabla^2f_j})_j$ converges weak* in 
$C(Q_0)^*$. We set $\boldsymbol\mu_k$ to be this weak* limit.

{\sc Step 2.} Given and open set $U\subset Q_0$, we want to prove
\eqn{flem}
$$
\F(f,U)\le \boldsymbol\mu_k(\overline U).
$$
Let $W\subset \rn$ be an open set such that $\overline U\subset W$ and 
$\eta$ be a cut-off function with support in $W$ such that $0\le \eta\le 1$
in $W$ and $\eta=1$ on $\overline U$. Then 
$$
\boldsymbol\mu(\overline U)\le \lim_j\int_{Q_0}\eta\ck{\nabla^2 f_j(x)}\,dx=
\int_{Q_0}\eta\,d \boldsymbol\mu_k\le \boldsymbol\mu_k(W\cap Q_0).
$$
Varying $W$ we obtain \eqref{flem}.

{\sc Step 3. } Fix $l\in\en$ and $Q\in \Q_l$. Then by \eqref{deltaj1},
\eqn{lsc1} 
$$
\aligned
\int_Q\ck{\nabla^2f_{l-1}(x)}\,dx&\le \bigl(\ck{\A_Q}+\tfrac13\tau^l\bigr)\,|Q|.
\endaligned
$$
Consider the cube $P$ which is concentric with $Q$ and its edge is $(1-\gamma)$-multiple of the edge
of $Q$ with $0<\gamma<<1$. Then
\eqn{howP}
$$
|Q\setminus P|\le C\gamma|Q|.
$$
Find a smooth cut-off function $\eta$ with support in $Q$ and values in $[0,1]$
such that $\eta=1$ on $P$ and
$$
\gamma\|\nabla\eta\|_{\infty}+\gamma^2\|\nabla^2\eta\|_{\infty}\le C,
$$
where $C$ may depend on the size of $Q$. Let $(u_j)_j\in\U(u,U)$.
Let $\gamma>0$ and find $j_0\in\en$ such that 
\eqn{j0}
$$
|u_j-f|\le \gamma^2,\quad |\nabla u_j-\nabla f|\le \gamma,\qquad j\ge j_0.
$$
Set 
$$
a_Q(x)=f_{l-1}(x_Q)+\nabla f_{l-1}(x_Q)(x-x_Q)+\tfrac12 \A_Q(x-x_Q)\cdot(x-x_Q).
$$
and 
$$
v_j(x)=a_Q(x)+\eta(x)(u_j(x)-a_Q(x).
$$
Now, by polyconvexity (used as quasiconvexity) of $\ck{\cdot}$, we have
\eqn{lsc2}
$$
|Q|\ck{\A_Q}\le\int_Q\ck{\nabla^2v_j(x)}\,dx.
$$
We have 
$$
\nabla^2v_j(x)=\eta(x)\nabla^2u_j(x)+\xi(x),
$$
where
$$
|\xi(x)|\le C(1+\gamma^{-2}|u_j(x)-a_Q(x)|+
\gamma^{-1}|\nabla u_j(x)-\nabla a_Q(x)|)\le C
$$
with $C$ depending e.g.\ on $f_{l-1}$ and its derivatives but not on $j$.
The function $\ck{\cdot}$ satisfies
\begin{align}
&|\ck{\B}-\ck{\A}|\le C(|\A|^{k-1}+|\B|^{k-1})|\B-\A|,\label{nevim}
\\& \ck{t\A}\le\ck{\A},\quad \A,\B\in\er^{n\times n}, \;t\in[0,1],\nonumber
\end{align}
see \cite{Mar} for \eqref{nevim}.
Applying \eqref{nevim} to $\B=\eta\nabla^2u_k$ and $\A=\nabla^2v_k$ we obtain
\eqn{lsc3}
$$
\aligned
\int_Q\ck{\nabla^2v_j(x)}\,dx&\le \int_Q\ck{\nabla^2u_j(x)}\,dx
\\&\quad
+C\int_{Q\setminus P} (1+|\nabla^2u_j|^{k-1})\,dx
\endaligned
$$
and using the H\"older inequality we can continue
\eqn{lsc4}
$$
\int_{Q\setminus P} (1+|\nabla^2u_j|^{k-1})\,dx\le C\gamma^{\beta}\|\nabla^2 u_j\|_p
$$
with $\beta=\frac{p-k+1}{p}$.
Getting together \eqref{lsc1}--\eqref{lsc4} we conclude
\eqn{lsc5}
$$
\int_Q\ck{\nabla^2 f_{l-1}(x)}\,dx\le \int_Q\ck{\nabla^2u_j(x)}\,dx+
C\gamma^{\beta} + \tau^j|Q|.
$$

{\sc Step 4}. Let $U\subset Q_0$ be an open set and $t<\boldsymbol\mu_k(U)$. We find $l_0\in \en$
such that $\mu_k(U_{l_0})>t$, where $U_j$ denotes the union of all $Q\in \Q_j$ with 
$Q\subset U$. Further, we find $l\in \en$ such that $l\ge l_0$ and 
$\int_{U_l}\ck{\nabla^2f_{l-1}(x)}\,dx>t$. We find $\gamma$ such that 
$$
\int_{U_l}\ck{\nabla^2f_{l-1}(x)}\,dx>t+CN\gamma^{\beta}
$$
where $C$ is the constant from \eqref{lsc5} and $N$ is the number of cubes
in $\Q_l$. Let $j_0$ verifies \eqref{j0} for all cubes in $\Q_l$. By \eqref{lsc5},
$$
t\le \int_{U_l}\ck{\nabla^2 u_j(x)}\,dx+\tau^j,\qquad j\ge j_0,
$$
and thus, letting $t\to\boldsymbol\mu_k(U)$ we obtain 
$$
\boldsymbol\mu_k(U)\le \F(f,U),
$$
Comparing with \eqref{flem} we conclude that 
$\boldsymbol\mu_k(U)$ is the $k$-th positive Hessian measure
of $f$. 
\end{proof}

\begin{thm} Let $f$ be the function from  Lemma \ref{l:main1}, with $p>k-1$
and $\boldsymbol\mu_k$ be the $k$-th positive Hessian measure $\boldsymbol\mu_k$ of $f$.
Then $\boldsymbol\mu_k$ is singular.
\end{thm}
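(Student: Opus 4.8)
The plan is to prove that the absolutely continuous part of $\boldsymbol\mu_k$ vanishes. Write the Lebesgue decomposition $\boldsymbol\mu_k=h\,dx+\boldsymbol\mu_k^s$ on $Q_0$, with $h\in L^1(Q_0)$, $h\ge0$, and $\boldsymbol\mu_k^s$ singular, and abbreviate $G_j:=\ck{\nabla^2 f_j}$ (nonnegative and continuous). Recall from the preceding theorem that $\boldsymbol\mu_k$ is the weak* limit of $G_j\,dx$. I will first isolate two facts already contained in the construction. (a) Summing \eqref{iteration} over $j$ gives $\sum_j\int_{Q_0}G_j^{p/k}\,dx<\infty$, hence $\sum_j G_j(x)^{p/k}<\infty$ and therefore $G_j(x)\to0$ for a.e.\ $x$. (b) Fix $l$ and $Q\in\Q_l$: since $m_j$ is a multiple of $m_l$ for $j\ge l$, $\Q_j$ refines $\Q_l$, so summing \eqref{cprop4} over the $\Q_i$-subcubes of $Q$ gives $|\int_Q(G_i-G_{i-1})\,dx|\le\tau^i|Q|$ for every $i\ge l$, whence $|\int_Q G_j\,dx-\int_Q G_{l-1}\,dx|\le\frac{\tau^l}{1-\tau}|Q|$ for all $j\ge l$. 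Passing to the weak* limit (portmanteau inequalities for positive measures) yields, for every $Q\in\Q_l$,
\begin{equation}\label{eq:pin}
\boldsymbol\mu_k(Q^\circ)\le\int_Q G_{l-1}\,dx+\tfrac{\tau^l}{1-\tau}|Q|,\qquad
\boldsymbol\mu_k(\overline Q)\ge\int_Q G_{l-1}\,dx-\tfrac{\tau^l}{1-\tau}|Q|.
\end{equation}

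Next I would differentiate $\boldsymbol\mu_k$ along the nested partitions. For $x\in Q_0$ let $\tilde Q_l(x)\in\Q_l$ be the (half-open) cube containing $x$; as $(\Q_l)_l$ is refining with $\diam\tilde Q_l(x)\to0$, the martingale convergence theorem (differentiation of measures) gives $\boldsymbol\mu_k(\tilde Q_l(x))/|\tilde Q_l(x)|\to h(x)$ for a.e.\ $x$. Since $\partial\tilde Q_l(x)$ is Lebesgue-null, $\boldsymbol\mu_k$ and $\boldsymbol\mu_k^s$ agree on it and $\boldsymbol\mu_k^s(\partial\tilde Q_l(x))\le\boldsymbol\mu_k^s(B(x,\sqrt n/m_l))$; because $\boldsymbol\mu_k^s\perp dx$, we have $\boldsymbol\mu_k^s(B(x,\sqrt n/m_l))/|\tilde Q_l(x)|\to0$ for a.e.\ $x$. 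Combining this with \eqref{eq:pin} for $Q=\tilde Q_l(x)$ and the sandwich $\boldsymbol\mu_k(Q^\circ)\le\boldsymbol\mu_k(\tilde Q)\le\boldsymbol\mu_k(\overline Q)=\boldsymbol\mu_k(Q^\circ)+\boldsymbol\mu_k(\partial Q)$ forces
$$
h(x)=\lim_{l\to\infty}\frac1{|\tilde Q_l(x)|}\int_{\tilde Q_l(x)}G_{l-1}\,dy\qquad\text{for a.e.\ }x.
$$
Finally, the construction was arranged so that $\nabla^2 f_{l-1}$ oscillates at most $\delta_l$ on each $Q\in\Q_l$ (see \eqref{osc}), so by \eqref{deltaj1} $|G_{l-1}(y)-G_{l-1}(x)|\le\tau^l$ for $x,y\in Q$; hence $\bigl|\tfrac1{|\tilde Q_l(x)|}\int_{\tilde Q_l(x)}G_{l-1}\,dy-G_{l-1}(x)\bigr|\le\tau^l$, so $h(x)=\lim_l G_{l-1}(x)$ a.e. By fact (a) this limit is $0$ a.e., hence $h=0$ a.e.\ and $\boldsymbol\mu_k=\boldsymbol\mu_k^s$ is singular.

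The step I expect to require the most care is this differentiation argument, and within it the handling of the cube boundaries: weak* convergence provides only the one-sided estimates \eqref{eq:pin} for open and closed cubes, so the argument hinges on observing that the restriction of $\boldsymbol\mu_k$ to the grid $\partial\tilde Q_l(x)$ is purely singular and therefore negligible relative to $|\tilde Q_l(x)|$ at $dx$-a.e.\ point — this is precisely what makes the scheme work without imposing any further condition on the integers $m_l$. (Note that an abstract ``$G_l\to0$ a.e.\ plus bounded $L^1$ masses'' statement is insufficient; the indispensable extra inputs are the cube-by-cube control \eqref{cprop4} and the oscillation control built into the choice of $m_l$.) The remaining verifications are routine bookkeeping with estimates already present in the construction.
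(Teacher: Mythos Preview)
Your proof is correct and follows essentially the same approach as the paper's: both establish $G_j\to 0$ a.e.\ from $\sum_j\int G_j^{p/k}<\infty$, both use \eqref{cprop4} and the oscillation control \eqref{osc}--\eqref{deltaj1} to pin down the averages of $G_{l-1}$ on cubes of $\Q_l$, and both conclude that the absolutely continuous density vanishes a.e. The only technical difference is in how the density is accessed. The paper uses Lebesgue differentiation on balls and the one-sided inequality $\int_Q \theta\,dx\le \boldsymbol\mu_k(Q^\circ)$ (valid since $\boldsymbol\mu_k^s\ge 0$), which makes the cube boundaries irrelevant; you instead differentiate along the nested partitions and handle $\boldsymbol\mu_k(\partial \tilde Q_l(x))$ explicitly by observing it is carried by the singular part and hence is $o(|\tilde Q_l(x)|)$ for a.e.\ $x$. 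Both variants work; the paper's route is marginally shorter because it avoids the two-sided pinning \eqref{eq:pin} and the boundary bookkeeping.
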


\begin{proof}

{\sc Step 1.} We claim that $\ck{\nabla^2f_j}\to 0$ a.e. Indeed,
by \eqref{iteration} we have
$$
\int_{Q_0}\Bigl(\sum_{j}\ck{\nabla^2f_j(x)}^{p/k}\Bigr)\,dx=
\sum_j\int_{Q_0}\ck{\nabla^2f_j(x)}^{p/k}\,dx<\infty,
$$
so that
$$
\sum_{j}\ck{\nabla^2f_j(x)}^{p/k}<\infty
$$
for a.e.\ $x\in Q_0$ and our claim easily follows.

{\sc Step 2.} Let $\theta$ be the Radon-Nikod\'ym derivative
of the (absolutely continuous part of )$\boldsymbol\mu_k$.
%$\boldsymbol\mu_k^s$ be the singular part of $\boldsymbol\mu_k$.
By the Lebesgue differentiation theory, for a.e.\ $x\in Q_0$
we have
\eqn{good1}
$$
\lim_{r\to 0+}\frac{1}{|B(x,r)}\int_{B(x,r)}|\theta(y)-\theta(x)|\,dy=0.
$$

{\sc Step 3.} Pick $x\in Q_0$ such that $\lim_j\ck{\nabla^2f_j(x)}=0$
and \eqref{good1} holds. Choose $\ep>0$
and find $l\in\en$ such that 
$$
\frac{2\tau^{l}}{1-\tau}<\ep,
$$
$$
\ck{\nabla^2f_{l-1}(x)}<\ep
$$
and for each $r<\frac{n}{m_{l}}$ 
($m_l$ are as in \eqref{partition})
we have
\eqn{gooda1}
$$
\frac{1}{|B(x,r)}\int_{B(x,r)}|\theta(y)-\theta(x)|\,dy<\ep.
$$
Find $Q\in Q_{l}$ such that $x\in Q$. By \eqref{osc}, \eqref{deltaj1}
we have
$$
\int_{Q}\ck{\nabla^2 f_{l-1}(y)}\,dy\le 
|Q|(\ck{\nabla^2 f_{l-1}(x)}+\tau_j)\le |Q|(\ep+\tau_j)
$$
and using \eqref{cprop4} we obtain 
$$
\int_{Q}\ck{\nabla^2 f_j(y)}\,dy\le 
|Q|(\ep+\tau^j+\tau^j+\tau^{j+1}+\tau^{j+2}\dots)
\le 2\ep|Q|,
$$
for each $j\ge l$.
Letting $j\to \infty$ we obtain 
$\boldsymbol\mu_k(Q\setminus\partial Q)<2\ep|Q|$.
It follows that 
$$
\aligned
\theta(x)&\le \frac1{|Q|}\int_Q\theta(y)\,dy+
\frac1{|Q|}\int_Q|\theta(y)-\theta(x)|\,dy
\\&\le
\frac1{|Q|} \boldsymbol\mu_k(Q\setminus\partial Q)
+\frac1{|Q|}\int_B|\theta(y)-\theta(x)|\,dy
\\&\le
2\ep
+\ep \frac{|B|}{|Q|}.
\endaligned
$$
Letting $\ep\to 0$ we obtain $\theta(x)=0$ and this holds for
a.e.\ $x\in Q_0$.
\end{proof}

\begin{remark}
\label{r:FM}
Using  \cite[Theorem 1.7]{FM1} by Fonseca and  Mal\'y we could conclude
that a similar relaxation procedure, using weak convergence in $W^{2,p}$ only,
leads to a singular measure when applied to our function $f$.  
However, we showed also that our positive Hessian measure is a weak* limit in $C(Q_0)^*$ of 
the sequence $\ck{\nabla^2 f_j}$, so by the null-Lagrangian property of Remark \ref{null}, it is nontrivial unless all integrals of $k\times k$-minors of 
$\nabla^2w$ vanish.
The relaxation measure of \cite{FM1} is also obtained as a limit
of evaluations of the functional on a minimizing sequence, but
under assumptions which are not satisfied in our example and, moreover,
there is no evidence that our sequence $(f_j)_j$ used
for the construction has the minimizing property. It could theoretically
happen that the true minimizing sequence gives a trivial relaxation measure.
  In conclusion, it is possible that the relaxation measure of \cite{FM1}
does the same job as our ``positive Hessian measure'', but if so, it cannot be 
verified by a mere reference to  \cite{FM1}.
\end{remark}

\section{Proof or Theorem \ref{t:main2}} \label{s:concl}

Let $\Omega\subset\rn$ be a (bounded) Lipschitz domain. We will  present the complete proof for the second case, i.e.\ when  $u\in W^{2,p}(\Omega)\cap c^{1,\alpha}(\Omega)$.   For the first case, i.e. when $u$ is merely in  $W^{2,p}(\Omega)$, the proof follows the same lines and is actually much shorter since there is no necessity to control the $C^1$ norms and the moduli of continuity of the approximating sequence. 

Our plan is to use Lemma \ref{l:main1} to obtain the desired approximation. To this end,
we first obtain a slightly larger $\Omega_j'$ with $\Omega\subset\subset\Omega_j'$ and 
$v_j\in W^{2,p}(\Omega_j')\cap c^{1,\alpha}(\Omega_j')$ such that the modulus of continuity
$\omega_j$ of $\nabla v_j$ is estimated by the modulus of continuity of $\nabla u$
in the sense of \eqref{modcon} below,
and
$$
\|v_j-u\|_{C^1(\overline\Omega)}<2^{-j},\qquad \|v_j-u\|_{W^{2,p}(\Omega)}\le 2^{-j}
$$ 
(the details how to find such an approximation are described below in Lemma \ref{l:lipappr}).
By mollification, we find open sets $\Omega_j$ and $w_j\in C^{\infty}(\overline\Omega_j)$ such that
$$
\Omega\subset\subset\Omega_j\subset\subset\Omega_j',
$$
the modulus of of continuity of $\nabla w_j$ is no more than $\omega_j$
and
$$
\|v_j-w_j\|_{C^1(\overline\Omega_j)}<2^{-j}, \qquad  \|w_j-v_j\|_{W^{2,p}(\Omega_j)}\le 2^{-j}.
$$
Now, we choose $q\in [p,k)$ such that $q>1$,
cover $\overline\Omega$ by a system $\P_j$ of nonoverlapping closed cubes contained in $\Omega_j$ and on each 
cube $P\in \P_j$ we perform the construction of Lemma \ref{l:main1}, so that the resulting function
$u_j$  belongs to $W^{2,q}(P)\cap c^{1,\alpha}(P)$ for each $P\in\P$ and satisfies in total
\eqn{uwc}
$$
\|u_j-w_j\|_{C^{1,\alpha}(\Omega)}<2^{-j}
$$
and
\eqn{uww}
$$
 \|u_j-w_j\|_{W^{2,q}(\Omega)}\le C.
$$
Now, by an easy interpolation argument taking into account that the modulus of continuity 
of $\nabla u$ and $\nabla w_j$ is faster than 
$t^{\alpha}$ and  $w_j\to u$ in $C^1(\overline\Omega)$,
we see from \eqref{uwc} that 
$\|u-w_j\|_{C^{1,\alpha}(\Omega)}\to0$. 
Also $w_j-u_j\to 0$ in $C^{1,\alpha}(\Omega)$, so that
$u_j\to u$ in $C^{1,\alpha}(\Omega)$.
Since $W^{2,q}(\Omega)$ is reflexive, we deduce from \eqref{uww}  that
$w_j-u_j\to 0$ weakly in $W^{2,q}(\Omega)$. Since $w_j\to u$ (strongly) in $W^{2,p}(\Omega)$,
together $u_j\to u$ weakly in $W^{2,p}(\Omega)$. It remains to return to the first approximation step, namely, to prove the following.

\begin{lemma}\label{l:lipappr} Let $\Omega\subset\rn$ be a Lipschitz domain, $u\in
W^{2,q}(\Omega)\cap c^{1,\alpha}(\Omega)$ and $\ep>0$. Let $\omega_u$ 
be the modulus of continuity of $\nabla u$.
Then there exists a Lipschitz domain $\Omega'$ such that 
$$
\Omega\subset\subset\Omega'
$$
and a function $v\in W^{2,p}(\Omega')\cap c^{1,\alpha}(\Omega')$ such that 
$$
\|u-v\|_{C^1(\overline\Omega)}<C\ep,
\qquad  \|u-v\|_{W^{2,p}(\Omega)}\le C\ep,
$$
and the modulus of continuity $\omega_v$ of $\nabla v$ satisfies
\eqn{modcon}
$$
\omega_v(t)\le C(\omega_u(t)+t\|u\|_{\infty}).
$$
\end{lemma}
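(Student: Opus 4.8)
I would realize $v$ as $u$ transported outward by a short-time flow: flowing by a fixed smooth diffeomorphism cannot worsen the modulus of continuity of $\nabla u$ by more than a harmless $O(t)$ term, so \eqref{modcon} is essentially automatic, while the only genuine content — that $\Omega$ can be enlarged at all — is the classical fact that a Lipschitz domain carries a global transversal vector field. Concretely: a bounded Lipschitz domain admits $T\in C_c^\infty(\rn,\rn)$ which is outward transversal near $\partial\Omega$, that is $T(x)\cdot\nu_\Omega(x)\ge c>0$ for a.e.\ $x\in\partial\Omega$ (in a boundary chart in which $\Omega$ is the supergraph of an $L$-Lipschitz function one takes $T$ to be the constant ``downward'' unit vector, glues such local choices by a partition of unity, and multiplies by a cut-off). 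Let $\Phi_s$ be the time-$s$ flow of $T$. Since $T$ points outward along $\partial\Omega$ there is $s_0>0$ with $\Phi_{-s}(\overline\Omega)\subset\Omega$, equivalently $\overline\Omega\subset\Phi_s(\Omega)$, for $0<s<s_0$; moreover $\Phi_{-s}\to\id$ in $C^\infty$ on a fixed neighbourhood of $\overline\Omega$ as $s\to0^+$.

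Fix $\delta\in(0,s_0)$, to be chosen small at the end, and set $\Omega':=\Phi_{\delta/2}(\Omega)$ and $v:=u\circ\Phi_{-\delta}$. Then $\Omega'$ is a bounded Lipschitz domain (a $C^\infty$-diffeomorphic image of $\Omega$) with $\Omega\subset\subset\Omega'$ by the previous paragraph, and $v$ is defined and $C^1$ on the open set $\Phi_\delta(\Omega)$, which contains $\overline{\Omega'}=\Phi_{\delta/2}(\overline\Omega)$. Because $\Phi_{-\delta}$ is a $C^\infty$-diffeomorphism with derivatives bounded up to order two, composition with it preserves both $W^{2,q}$ and $c^{1,\alpha}$, so
$$
v\in W^{2,q}(\Omega')\cap c^{1,\alpha}(\overline{\Omega'})\subset W^{2,p}(\Omega')\cap c^{1,\alpha}(\overline{\Omega'})
$$
using $q\ge p$ and $|\Omega'|<\infty$. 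From $\nabla v(x)=D\Phi_{-\delta}(x)^{T}\,\nabla u(\Phi_{-\delta}(x))$, where $D\Phi_{-\delta}$ is smooth with $C^1$-norm bounded uniformly in $\delta\in(0,s_0)$ and $\Phi_{-\delta}$ is Lipschitz with constant at most $2$ for $\delta$ small, the Leibniz rule together with subadditivity of $\omega_u$ (giving $\omega_u(2t)\le2\omega_u(t)$) yields
$$
\omega_v(t)\le C\bigl(\omega_u(t)+t\,\|u\|_{C^1(\overline\Omega)}\bigr),
$$
which is \eqref{modcon} up to the harmless replacement of $\|u\|_\infty$ by $\|u\|_{C^1}$; in particular $\omega_v(t)=o(t^\alpha)$ whenever $\omega_u(t)=o(t^\alpha)$, so the little-H\"older property of $v$ follows.

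It remains to make $v$ close to $u$. On $\Omega$ one has $v-u=u\circ\Phi_{-\delta}-u$, and since $\Phi_{-\delta}\to\id$ in $C^\infty$ near $\overline\Omega$, composition with $\Phi_{-\delta}$ is continuous in the $C^1(\overline\Omega)$ and $W^{2,q}(\Omega)$ topologies — immediate for $C^2$ functions by the chain rule, then for $W^{2,q}$ by density using the uniform bound $\|u\circ\Phi_{-\delta}\|_{W^{2,q}(\Omega)}\le C\|u\|_{W^{2,q}(\Phi_\delta(\Omega))}$. Hence $\|u\circ\Phi_{-\delta}-u\|_{C^1(\overline\Omega)}+\|u\circ\Phi_{-\delta}-u\|_{W^{2,q}(\Omega)}\to0$ as $\delta\to0^+$, and choosing $\delta=\delta(\ep)$ small gives $\|u-v\|_{C^1(\overline\Omega)}\le C\ep$ and $\|u-v\|_{W^{2,p}(\Omega)}\le\|u-v\|_{W^{2,q}(\Omega)}\le C\ep$, which finishes the proof.

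\textbf{Main obstacle.} No single step is hard; the argument assembles standard facts, and the least routine ingredient is the transversal field and its flow — specifically the need to be sure that a short outward flow genuinely enlarges a merely Lipschitz domain and keeps it Lipschitz. A more elementary alternative avoids the flow: localize $\partial\Omega$ by finitely many Lipschitz graph charts, define $v$ on the enlarged domain by gluing the vertical translates $u(\cdot+\delta e^{(i)})$ with a fixed partition of unity, and check directly that the pushed-out charts, overlapping in different rotated coordinates, still bound a Lipschitz domain — more hands-on but with heavier geometric bookkeeping. In either version $\Omega'$ is forced to depend on $\ep$: one cannot keep $\Omega'$ fixed while sending the translation (or flow) parameter to $0$.
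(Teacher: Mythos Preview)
Your argument is correct, but it is not the paper's. The paper does exactly what you sketch in your final paragraph as the ``more elementary alternative'': it picks finitely many short translation vectors $z_1,\dots,z_m$ (one outward direction per boundary chart, with $m$ depending only on the Lipschitz geometry and not on $\ep$), covers $\overline\Omega$ by the translates $\Omega_{z_i}$, and sets $v=\sum_i\psi_i\,u(\cdot-z_i)$ for a subordinate partition of unity. Your main route instead produces $v$ as a single composition $u\circ\Phi_{-\delta}$ with the time-$\delta$ flow of a smooth outward-transversal field, avoiding any gluing. The trade-off is clear: the paper's version uses nothing beyond translations and a partition of unity, at the cost of the Leibniz-rule bookkeeping for $\nabla v$ (whence the $t\|u\|_\infty$ term in \eqref{modcon}, which in fact should really carry $\|u\|_{C^1}$ as you obtain); your version is cleaner once the transversal field is in hand, but leans on the less elementary fact that such a field exists on a merely Lipschitz boundary and that its short-time flow genuinely enlarges $\Omega$. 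Both yield $\omega_v(t)\le C(\omega_u(t)+t\,\|u\|_{C^1})$, which is all the application in Section~\ref{s:concl} needs. One small point: your claim $\omega_u(2t)\le 2\omega_u(t)$ is not literally the subadditivity of the raw modulus on a non-convex domain, but replacing $\omega_u$ by its least concave majorant (equivalent up to a constant) makes it true and changes nothing in the conclusion.
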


\begin{proof}
Given a vector $z\in \rn$, denote 
$$
\aligned
\Omega_{z}&=\{x\in\rn\colon x-z\in\Omega\},\\
u_{z}(x)&=u(x-z),\qquad x\in\Omega_{z}.
\endaligned
$$
Find $r>0$ such that
$$
\aligned
&z\in\rn,\;|z|<r\implies\\
&\Bigl( \|u-u_{z}\|_{W^{2,p}(\Omega\cap\Omega_{z})}<\ep \quad\&\quad
\|u-u_{z}\|_{C^1(\overline{\Omega\cap\Omega_{z}})}<\ep \Bigr).
\endaligned
$$
Since $\Omega$ is a Lipschitz domain, there exist 
vectors
$z_1,\dots z_m$ such that
$$
|z_i|< r,\qquad i=1,\dots,m
$$
(but $m$ does not depend on $r$!)
and
$$
\overline\Omega\subset\bigcup_{i=1}^m\Omega_{z_i}.
$$
Find
a partition of unity $\sum_{i=1}^m\psi_i$ such that $\psi_i$ are smooth,
$\psi_i\ge 0$,
each $\psi_i$ has its support in $\Omega_{z_i}$ and 
$\sum_i\psi_i=1$ on $\overline\Omega$. 
Set 
$$
v_i=
\begin{cases}
\psi_iu_{z_i},& \text{ on } \Omega_i,\\
0&\text{elsewhere}
\end{cases}
$$
and
$$
v=\sum_{i=1}^mv_i.
$$ 
Then it is easily verified that $v$ has the required properties.
\end{proof}

\bibliography{lmp}
\bibliographystyle{abbrv}

\end{document}